\newtheorem{theorem}{Theorem}[section]
\newtheorem{proposition}[theorem]{Proposition}
\newtheorem{conjecture}[theorem]{Conjecture}
\newtheorem*{example*}{Example}
\def\d{\delta}
\def\e{\epsilon}
\def\ca{\mathcal{A}}
\def\cc{\mathcal{C}}
\def\cd{\mathcal{D}}
\def\cf{\mathcal{F}}
\def\ct{\mathcal{T}}
\def\cp{\mathcal{P}}
\def\cs{\mathcal{S}}
\def\cg{\mathcal{G}}
\def\cw{\mathcal{W}}
\def\ci{\mathcal{I}}
\def\Z{\mathbb{Z}}
\def\F{\mathbb{F}}
\def\E{\mathbb{E}}
\def\P{\mathbb{P}}
\DeclareMathOperator{\Var}{Var}
\title{Three applications of coverings to difference patterns}
\author{Thomas Karam\footnote{Mathematical Institute, University of Oxford. Email: \texttt{thomas.karam@maths.ox.ac.uk}.}}
\begin{document}
\maketitle

\begin{abstract}

We show that a conceptually simple covering technique has surprisingly rich applications to density theorems and conjectures on patterns in sets involving set differences. These applications fall into three categories: (i) analogues of these statements to distance $2$ versions of the pattern, (ii) reduction of these statements to relative versions, and (iii) reductions of these statements to a quasirandom case with respect to some quantities that affect the number of realisations of the pattern.

\end{abstract}

\tableofcontents

\section{Introduction}

Throughout this paper we will use the following notations. If $n$ is a positive integer then we will write $[n]$ for the set $\{1,\dots,n\}$ of positive integers between $1$ and $n$. We will often use notations such as $[n]^{d_1} \cup \dots \cup [n]^{d_s}$, where $s,d_1, \dots, d_s$ are positive integers; unless stated otherwise they will always be understood as disjoint unions: for instance, $[n] \cup [n]$ will refer not to $[n]$ but to the disjoint union of two copies of $[n]$. We will also often write a subset $A \subset [n]^{d_1} \cup \dots \cup [n]^{d_s}$ as $A_1 \cup \dots \cup A_s$. In such decompositions it will be implicit that $A_i$ is a subset of the $i$th part $[n]^{d_i}$ of the union for every $i$, unless stated otherwise. If $A,B$ are two finite sets with $A \subset B$ and $B \neq \emptyset$, then we will refer to the ratio $|A|/|B|$ as the \emph{density of $A$ inside $B$}, or as the \emph{density of $A$} when there is no ambiguity as to which set $B$ is being considered.

\subsection{Background on patterns in set systems}

Ramsey theory largely consists in statements broadly asserting that if a subcollection of a collection of objects is large enough, then we can find some configuration of elements (usually some pair of elements, or some $k$-tuple of elements for some fixed $k$) of the subcollection that satisfies some desired property. A famous example is Ramsey’s theorem (\cite{Ramsey}, discovered in 1930) which says that if $k$ is an integer and a large enough complete graph $K_n$ has its edges each coloured in red or blue then it must eventually contain a monochromatic complete graph $K_k$ (see \cite{Campos Griffiths Morris Sahasrabudhe} for a recent breakthrough of Campos, Griffiths, Morris and Sahasrabudhe on the upper bound on the smallest such $n$, and \cite{Gupta Ndiaye Norin Wei} for a further quantitative improvement by Gupta, Ndiaye, Norin and Wei). Within these statements, a large class of results is known as that of density theorems. There, the assumptions are that the subcollection contains at least a fixed positive proportion of the objects of the ambient collection, and then that the size of the whole collection is large enough depending on this proportion. One of the celebrated such statements is Szemer\'edi’s theorem \cite{Szemeredi} first proved in 1975.

\begin{theorem}\label{Szemeredi’s theorem}

Let $k$ be a positive integer and let $\d>0$. If $n$ is large enough depending on $k,\d$ only, then every subset $A\subset[n]$ with size at least $\d n$ contains an arithmetic progression of length $k$.

\end{theorem}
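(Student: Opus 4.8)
The plan is to prove the statement by a density increment argument, following the analytic strategy of Roth for $k=3$ and of Gowers for general $k$. First I would transfer the problem from $[n]$ to the cyclic group $\Z/N\Z$ for a prime $N$ comparable to $n$, at the cost of only a constant factor in the density and taking care to discard the progressions that wrap around; it then suffices to find a genuinely non-degenerate $k$-term progression in a subset $A \subset \Z/N\Z$ of density $\geq \delta/C$. Writing $1_A = \delta' + f$, where $\delta'$ is the density and $f$ the balanced function, I would expand the normalised count $\E_{x,d}\, 1_A(x) 1_A(x+d) \cdots 1_A(x+(k-1)d)$ multilinearly; the main term is $(\delta')^k$, which comfortably beats the $O(1/N)$ contribution of the degenerate progressions once $N$ is large.

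The heart of the argument is a dichotomy. By the generalised von Neumann inequality, every term in the multilinear expansion involving at least one factor of $f$ is bounded in absolute value by the Gowers uniformity norm $\|f\|_{U^{k-1}}$. Hence if $\|f\|_{U^{k-1}} < c(\delta)$ for a suitable $c(\delta) > 0$, the count is at least $(\delta')^k / 2 > 0$ and we are done. Otherwise $\|f\|_{U^{k-1}}$ is large, and I would invoke the inverse theorem for the $U^{k-1}$ norm: $f$ must then correlate with a structured function of degree $k-2$ — a Fourier character when $k=3$, and a polynomial phase or nilsequence in general. Partitioning $\Z/N\Z$ into long arithmetic progressions on which this structured function is essentially constant, and pigeonholing the correlation, I would extract a subprogression $P$, of length $N' \geq N^{c'}$, on which $A$ has density at least $\delta' + c''(\delta)$.

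Finally I would iterate. Rescaling $P$ back to an interval of length $N'$, we are in the same situation with the density increased by an amount depending only on $\delta$; since the density can never exceed $1$, after $O_\delta(1)$ steps we must fall into the uniform case and produce the progression, provided the length has not dropped below the threshold at which the uniform case applies — and tracking the (polynomial, $k$-dependent) shrinkage of the length at each step is exactly what determines how large $n$ must be in terms of $k$ and $\delta$. The main obstacle is the inverse theorem for $\|\cdot\|_{U^{k-1}}$ when $k \geq 4$: for $k = 3$ it is elementary Fourier analysis, but in general it requires the higher-order Fourier analysis of Green--Tao--Ziegler (Gowers's original proof instead used a weaker local inverse statement together with Freiman--Ruzsa-type arguments, which already suffices to drive the density increment). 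The alternative routes — Furstenberg's ergodic-theoretic proof via multiple recurrence and the structure theory of measure-preserving systems, or the hypergraph removal lemma — meet the analogous difficulty in the guise of the structure theorem or of hypergraph regularity.
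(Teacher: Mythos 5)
The paper does not actually prove this statement: Theorem \ref{Szemeredi’s theorem} is Szemer\'edi's theorem, quoted purely as background with a citation to Szemer\'edi's 1975 paper, so there is no internal proof to compare yours against. Your outline is a recognizable and essentially accurate account of the analytic density-increment proof (Roth for $k=3$, Gowers in general): pass to $\Z/N\Z$, split $1_A$ into its mean plus a balanced function $f$, control all terms of the $k$-linear count containing $f$ by $\|f\|_{U^{k-1}}$ via the generalised von Neumann inequality, and in the non-uniform case extract a density increment on a long subprogression and iterate, with the bookkeeping of lengths determining how large $n$ must be. As written, however, it is a roadmap rather than a proof: essentially all of the difficulty is concentrated in the ingredients you invoke without proof --- the inverse theorem for the $U^{k-1}$ norm (Green--Tao--Ziegler), or alternatively Gowers's local inverse statement together with the Balog--Szemer\'edi--Gowers and Freiman--Ruzsa machinery --- and in the step you pass over quickly, namely converting correlation with a degree-$(k-2)$ structured object into a density increment on a progression of length $N^{c'}$. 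For polynomial phases this does follow from Weyl-type equidistribution and pigeonholing, but for genuine nilsequences it requires the quantitative equidistribution and factorization theory on nilmanifolds; ``pigeonholing the correlation'' alone does not deliver it. So your proposal is correct as a description of a known proof strategy and of where its real content lies, which is a reasonable offering for a cited classical theorem, but it should be read as a reduction to those deep external results rather than a self-contained argument --- which is also exactly how the paper treats the theorem, namely by citation.
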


The present paper will focus on density theorems as opposed to colouring theorems. For instance, Szemer\'edi’s theorem is viewed as the density version of van der Waerden’s theorem \cite{van der Waerden}, which states that if $r,k$ are positive integers then any colouring of the integers with $r$ colours contains an arithmetic progression with length $k$. Conversely, van der Waerden’s theorem is viewed as the colouring version of Szemer\'edi’s theorem. For any positive integer $n$, colouring the integers of $[n]$ with $r$ colours ensures in particular that one colour is used for at least a proportion $1/r$ of these integers, and Szemer\'edi’s theorem hence implies van der Waerden’s theorem. More generally, the density version of a statement usually implies its colouring version for the same reason. In the converse direction however, the density version of a colouring result is not always true, as can be seen from several examples starting with Ramsey’s theorem itself. Indeed if $n$ is an integer which for simplicity we require to be even, then the complete bipartite graph $K_{n/2, n/2}$ contains $n^2/4$ edges, so in particular at least a quarter of the total number of edges of $K_n$, but does not contain any triangle, and that remains true however large $n$ is taken to be. As for the results for which the density version is true, the proof of the density version is usually much more involved than that of the colouring version. Again, this may already be seen to be the case with the proofs of Szemer\'edi’s theorem, compared to those of van der Waerden’s theorem.

Besides Szemer\'edi’s theorem, there are several density results which have received attention and for which the matter of the optimal bounds is an active research topic. For instance, obtaining reasonable bounds for a multidimensional version of Szemer\'edi’s theorem is still a wide open problem, and recent work of Peluse \cite{Peluse} establishes such bounds in a first special case of “$L$-shaped" configurations.

But the purely qualitative side of density theorems is also still far from completely understood. For instance, Gowers \cite{Gowers} describes the following conjecture (in its version with $d_1=1, \dots, d_s=s$) as a central open problem in Ramsey theory.

\begin{conjecture} \label{polynomial density Hales-Jewett conjecture}
Let $k, s, d_1, \dots, d_s$ be positive integers and let $\d>0$. If $n$ is large enough depending on $k, s, d_1, \dots, d_s, \d$ only then for every subset $A$ of the set \[[k]^{[n]^{d_1}}\times\dots\times[k]^{[n]^{d_s}}\] with density at least $\d$ there is a non-empty subset $S\subset [n]$ and an element \[y \in ([n]^{d_1} \setminus S^{d_1}) \cup \dots \cup ([n]^{d_s} \setminus S^{d_s})\] such that whenever the coordinates of $x$ are the same within each of the sets $S^{d_1}, \dots, S^{d_s}$, and coincide with those of $y$ outside these sets, we have that $x \in A$.
\end{conjecture}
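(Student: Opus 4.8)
Since Conjecture~\ref{polynomial density Hales-Jewett conjecture} is a well-known open problem --- Gowers names it a central one --- what follows is necessarily a program rather than a complete argument, and the aim is to isolate the one genuinely new difficulty. The plan is to run a density-increment argument of the kind used by Furstenberg and Katznelson, and in effective form by the Polymath project, to prove the density Hales--Jewett theorem. That theorem is precisely the instance $s=1$, $d_1=1$ of the conjecture: an element of $[k]^{[n]}$ that is constant on a nonempty $S\subset[n]$ and agrees with $y$ off $S$ is, as $a$ ranges over $[k]$, exactly a combinatorial line with wildcard set $S$. The content of the conjecture is that the pattern persists when the wildcard sets on the several parts are not free subsets but are forced to be the images $S^{d_1},\dots,S^{d_s}$ of a single common set $S$; already the case $s=1$, $d_1=2$, which asks for a combinatorial line in $[k]^{[n]^2}$ whose wildcard set is a square $S\times S$, is open, even though the corresponding arithmetic statement --- Bergelson and Leibman's density polynomial van der Waerden (polynomial Szemer\'edi) theorem --- is known. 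So the strategy is to treat this polynomial coupling as the only new ingredient and to import the remaining machinery.

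Concretely, I would first fix the objects on which the induction runs: a notion of \emph{polynomial subspace} of $[k]^{[n]^{d_1}}\times\dots\times[k]^{[n]^{d_s}}$, obtained by partitioning $[n]$ into $m$ blocks, restricting to sets $S$ that are unions of blocks, and freezing the coordinates lying outside the block structure, set up so that a faithful copy of the original configuration space sits inside each such subspace. Second, I would feed in a colouring input --- the polynomial Hales--Jewett theorem of Bergelson and Leibman, with the combinatorial proof of Walters --- together with a Varnavides-type averaging, turning ``$A$ contains one pattern'' into ``$A$ contains a positive density of patterns'', which is what an increment argument consumes. Third, assuming $A$ has density $\delta$ and contains no pattern, I would run the usual dichotomy in a suitable equal-slices-type measure: either $A$ is sufficiently insensitive along the wildcard directions, and an averaging over polynomial subspaces yields one on which $A$ has density at least $\delta+c(\delta)$; or $A$ correlates with a low-complexity obstruction, which is again upgraded to a density increment on a polynomial subspace. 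Iterating pushes the density past $1$, a contradiction.

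The main obstacle --- and the reason the conjecture remains open --- is the coupling step. In ordinary density Hales--Jewett the combinatorial subspaces of $[k]^n$ are again cubes $[k]^m$, so the recursion is formally clean; here $[n]^{d}$ partitioned into blocks indexed by $[m]^{d}$ is \emph{not} a clean copy of $[m]^{d}$, because a block index with a repeated $[m]$-coordinate --- the diagonal --- behaves differently from a generic one, and the diagonals for different $d_i$ interact with the common $S$ in mutually incompatible ways, so one cannot regularise the several parts independently. Overcoming this seems to require either a \emph{relative} density Hales--Jewett theorem over a pseudorandom majorant adapted to the diagonal structure, transferred by a dense-model argument, or an ergodic structure theorem for the characteristic factors of the commuting polynomial Hales--Jewett actions in the spirit of Host and Kra but for IP-polynomial group actions; neither is presently available in the generality needed. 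I therefore expect the bulk of the work to lie in formulating and proving the right relative statement, after which a transference step would finish the argument --- which is exactly the type of reduction to a relative version that the covering technique of this paper supplies, so the realistic route is to combine that reduction with a future relative density Hales--Jewett input rather than to attack the conjecture directly.
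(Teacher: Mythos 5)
This statement is Conjecture~\ref{polynomial density Hales-Jewett conjecture}: the paper does not prove it, and explicitly treats it as a central open problem (the paper's actual theorems are strictly weaker relatives --- the distance-$2$ results, the equivalences of Theorem~\ref{statements equivalent to the polynomial difference conjecture}, and the quasirandomness reduction of Theorem~\ref{no remarkable mod p forms reduction}). So there is no proof in the paper to compare against, and your submission is, as you yourself say, a program rather than a proof. Judged as a proof attempt it has a genuine gap, and you have located it accurately: the entire difficulty is the ``coupling step'', i.e.\ that a block decomposition of $[n]^{d}$ indexed by $[m]^{d}$ is not a faithful copy of the configuration space because of the diagonal blocks, and that the parts $S^{d_1},\dots,S^{d_s}$ are driven by a single $S$ and cannot be regularised independently. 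Your plan outsources exactly this step to inputs that do not exist (a relative density Hales--Jewett theorem over a diagonal-adapted pseudorandom majorant, or a Host--Kra-type structure theorem for IP-polynomial actions), so nothing in the proposal advances the conjecture beyond its known reductions; the density-increment/Varnavides scaffolding you describe is standard and is not where the obstruction lies.

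Two smaller points. First, your framing is consistent with what the paper actually does: its Theorem~\ref{statements equivalent to the polynomial difference conjecture}\,(iii) shows that a single well-chosen \emph{relative} version of even the $k=2$ difference statement would suffice, and its Theorem~\ref{corollary of distance 2 theorem} settles only the case $d_1=\dots=d_s=1$ (a ``simultaneous Sperner theorem''), precisely because powers of sets are not stable under unions and differences --- the same diagonal/coupling phenomenon you identify. Second, be careful with the claim that the case $s=1$, $d_1=1$ of the conjecture is exactly the density Hales--Jewett theorem: it is, but the already-open bottleneck appears at $k=2$ with $d\ge 2$ (Conjectures~\ref{case k=2 of polynomial DHJ}--\ref{square difference conjecture}), so any program should be tested first against the square difference conjecture before the full polynomial DHJ statement; a plan that does not say anything new about that special case cannot close the general one.
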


One motivation to consider Conjecture \ref{polynomial density Hales-Jewett conjecture} is that it would simultaneously imply both the Bergelson-Leibman theorem and the density Hales-Jewett theorem, two generalisations of Szemer\'edi’s theorem. An arithmetic progression $\{x, x+d, \dots, x+(k-1)d\}$ can be viewed as merely a special case of a set of the type $\{x, x+P_1(d), \dots, P_{k-1}(d)\}$ where $P_1, \dots, P_{k-1}$ are specified polynomials, and Bergelson and Leibman \cite{Bergelson Leibman} correspondingly extended Szemer\'edi’s theorem.

\begin{theorem}\label{Bergelson-Leibman theorem} Let $k$ be a positive integer, let $\d > 0$ and let $P_1,\dots,P_{k-1}$ be polynomials with integer coefficients and no constant term. If $n$ is large enough depending on $k, \d, P_1, \dots, P_{k-1}$ only then for every subset $A$ of $[n]$ with density at least $\d$ there exist positive integers $a$ and $d\ne 0$ such that $A$ contains all elements of \[\{a, a+P_1(d),\dots,a+P_{k-1}(d)\}.\] \end{theorem}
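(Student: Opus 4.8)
The plan is to deduce the theorem from a polynomial multiple recurrence statement in ergodic theory, in the spirit of the original argument of Bergelson and Leibman. First I would pass from the finitary formulation to an infinitary one by a soft compactness argument: if the conclusion failed for some $\delta > 0$, there would be arbitrarily large $n$ and sets $A_n \subseteq [n]$ with $|A_n| \ge \delta n$ containing no configuration $\{a, a + P_1(d), \dots, a + P_{k-1}(d)\}$ with $d \ne 0$; extracting a weak-$*$ limit of the normalised indicator functions and diagonalising over the countably many admissible values of $d$ produces a single set $A \subseteq \Z$ of positive upper Banach density $\bar{d}^*(A) \ge \delta$ that contains no such configuration for any $d \ne 0$.

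Next I would apply the Furstenberg correspondence principle: there is an invertible measure-preserving system $(X, \mathcal{B}, \mu, T)$ and a set $E \in \mathcal{B}$ with $\mu(E) = \bar{d}^*(A) \ge \delta$ such that, for every finite tuple of integers $m_1, \dots, m_r$,
\[
\bar{d}^*\left( A \cap (A - m_1) \cap \dots \cap (A - m_r) \right) \ \ge\ \mu\left( E \cap T^{-m_1} E \cap \dots \cap T^{-m_r} E \right).
\]
Taking $m_i = P_i(d)$, it then suffices to prove the polynomial Furstenberg recurrence theorem: for any such system and any $E$ with $\mu(E) > 0$,
\[
\liminf_{N \to \infty} \frac{1}{N} \sum_{d = 1}^{N} \mu\left( E \cap T^{-P_1(d)} E \cap \dots \cap T^{-P_{k-1}(d)} E \right) \ >\ 0 .
\]
This forces some $d \ne 0$ with the intersection of positive measure, hence $A \cap (A - P_1(d)) \cap \dots \cap (A - P_{k-1}(d)) \ne \emptyset$, which contradicts the choice of $A$ and proves the theorem.

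The heart of the matter, and the step I expect to be the main obstacle, is the proof of this recurrence statement. I would phrase it in $L^2(\mu)$ as a lower bound for $\liminf_N \langle f,\, \frac{1}{N} \sum_{d \le N} T^{P_1(d)} f \cdots T^{P_{k-1}(d)} f \rangle$ with $f = \mathbbm{1}_E$, and run Bergelson's PET (polynomial exhaustion) induction. To a system of non-constant, pairwise essentially distinct polynomials one attaches a complexity measure built from their degrees and leading-term equivalence classes, and shows, via the van der Corput difference lemma applied to the vectors $u_d = T^{P_1(d)} f_1 \cdots T^{P_{k-1}(d)} f_{k-1}$, that the $L^2$-asymptotics of the average are governed by averages attached to a strictly simpler system: the one obtained by fixing an index, differencing every $P_i$ against a shift $P_j(\,\cdot + h)$ for a generic $h$, and discarding redundant terms. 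Iterating brings the complexity down to a base case that reduces to a single linear average, handled by the mean ergodic theorem; more precisely one shows along the way that a suitable factor of the system (the Kronecker factor in the easiest instances, a distal/nilpotent factor in general, reached through Furstenberg's structure theorem for measure-preserving systems) is characteristic, so that positivity of the limiting average is verified on that factor by a direct Fubini-and-positivity computation. The delicate part is the bookkeeping — choosing the complexity so it genuinely strictly decreases under each differencing step, ensuring the van der Corput estimates can be chained without loss, and managing the multidimensional parameter over which the shifts $h$ range — whereas the correspondence principle and the final deduction above are routine. (One could also substitute a more recent finitary route, e.g.\ density-increment or hypergraph-based arguments, for the ergodic core.)
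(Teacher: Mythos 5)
The paper does not actually prove this statement: Theorem \ref{Bergelson-Leibman theorem} is quoted as background and attributed to Bergelson and Leibman \cite{Bergelson Leibman}, so there is no internal proof to compare yours against. What you have written is an outline of the original ergodic-theoretic argument of that cited paper, and while the overall shape (correspondence principle plus polynomial multiple recurrence proved by PET induction) is the right one, two of your steps are genuine gaps rather than routine details.

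First, the finitary-to-infinitary reduction as you describe it --- taking a weak-$*$ limit of normalised indicator functions and ``diagonalising over $d$'' to produce a single configuration-free set of upper Banach density at least $\delta$ --- does not work as stated: a pointwise or weak-$*$ limit of translated indicators need not retain any density, and the alternative of placing the finite counterexamples $A_{n_j}$ in widely spaced blocks fails because a configuration $\{a, a+P_1(d),\dots,a+P_{k-1}(d)\}$ may straddle two blocks, and the block gaps cannot in general be chosen to avoid all values of the $P_i$ and of their pairwise differences (for instance when some $P_i$ is linear). The standard correct route skips the intermediate infinite set entirely: apply the correspondence principle directly to the sequence $(A_{n_j})$ by taking a shift-invariant weak-$*$ limit of the empirical measures $\frac{1}{n_j}\sum_{a \le n_j} \delta_{T^a \mathbbm{1}_{A_{n_j}}}$ on $\{0,1\}^{\Z}$, and once the recurrence theorem yields a single $d \neq 0$ with $\mu(E \cap T^{-P_1(d)}E \cap \dots \cap T^{-P_{k-1}(d)}E) > 0$, unfold this (for that fixed $d$) into a positive count of configurations inside $A_{n_j}$ for large $j$, contradicting their choice. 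Second, and more seriously, the ``heart of the matter'' you identify --- the polynomial multiple recurrence theorem --- is precisely the content of the cited paper, and your sketch of it is inaccurate at the crucial point: positivity cannot in general be checked on the Kronecker factor, and Bergelson--Leibman's argument contains no reduction to a single distal or nil factor (characteristic nilfactors for polynomial averages came only later); rather, the PET/van der Corput induction is run in tandem with the Furstenberg--Katznelson machinery of compact and weak-mixing (primitive) extensions, i.e.\ a transfinite tower of factors, and that interplay is where essentially all of the work lies. As it stands, your proposal is a reasonable roadmap to the literature, but its core is deferred to the very theorem being cited.
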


Meanwhile, the density Hales-Jewett theorem, proved by Furstenberg and Katznelson (\cite{Furstenberg and Katznelson k=3}, \cite{Furstenberg and Katznelson}) using ergodic theory and then more combinatorially by the Polymath1 project \cite{Polymath}, generalises Szemer\'edi’s theorem by replacing subsets of integers by subsets of high-dimensional sets.

\begin{theorem} \label{density Hales-Jewett theorem}
Let $k$ be a positive integer and let $\d>0$. If $n$ is large enough depending on $k,\d$ only, then every subset $A$ of $[k]^n$ with density at least $\d$ contains some $k$-tuple of points $\{x^1, \dots, x^k\}$ such that for some non-empty subset $W$ of $[n]$ we have for all $j \in [d]$ and $i \in [n]$ that $x_i^j = j$ if $i \in W$ and $x_i^j = x_i^1$ otherwise.
\end{theorem}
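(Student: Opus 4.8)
The plan is to follow the combinatorial density-increment argument of the Polymath project; an alternative is the original Furstenberg--Katznelson route via an ergodic correspondence principle and a structure theorem for the associated measure-preserving systems, but I will describe the purely combinatorial approach. The proof goes by induction on $k$. For $k=1$ the statement is vacuous, and for $k=2$ it is equivalent to Sperner's theorem: identifying $[2]^n$ with the power set of $[n]$, a combinatorial line is exactly a strictly comparable pair $S \subsetneq S'$, and a family of subsets of $[n]$ of density bounded below cannot be an antichain once $n$ is large.

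For the inductive step, assume the theorem for alphabet size $k-1$ and let $A \subseteq [k]^n$ have density at least $\delta$ and contain no combinatorial line. It is convenient to work with the equal-slices measure on $[k]^n$ (pick a uniformly random composition of $n$ into $k$ parts, then a uniformly random point with those coordinate-value multiplicities), since it restricts cleanly to combinatorial subspaces and lets one apply the induction hypothesis slice by slice; a standard comparison shows that a set of positive uniform density has positive equal-slices density after passing to a suitable subspace, so it suffices to argue in the equal-slices world. The goal is then to produce a combinatorial subspace of dimension $m \to \infty$ as $n \to \infty$ on which $A$ has equal-slices density at least $\delta + c$ for some $c = c(\delta, k) > 0$. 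Since density never exceeds $1$, iterating this increment at most $O_{\delta,k}(1)$ times forces a contradiction once $n$ is large enough, which proves the case $k$.

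The structural core, and the place where the induction hypothesis enters, is the claim that a line-free set is captured by low-complexity structure: up to an error of small measure, $A$ is contained in a union of $k-1$ sets, each of which is insensitive to interchanging a fixed pair of symbols (its membership is unchanged when one swaps all occurrences of one symbol with another). The heuristic is that if a point together with suitable symbol-swapped relatives all lay in $A$ then one could complete a combinatorial line inside $A$; the case $k-1$ is used to guarantee, after restricting to a combinatorial subspace, that enough of these relatives are genuinely present. A complementary argument --- a multidimensional Sperner-type analysis, itself another application of the case $k-1$ --- then shows that a set contained in a bounded union of such ``complexity-one'' insensitive sets, yet of density bounded below, must concentrate, i.e.\ gain density, on a combinatorial subspace of large dimension. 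Chaining the two halves produces the required increment and closes the induction.

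The main obstacle is exactly this structural step: pinning down the correct insensitive-set decomposition, and then converting ``density at least $\delta$, covered by $O_k(1)$ insensitive sets'' into an honest density increment on a subspace whose dimension still tends to infinity. This needs an iteration interleaved with repeated passage to subspaces and careful bookkeeping for the equal-slices measure (in particular controlling the loss when one moves between the uniform and equal-slices measures and when one restricts). The resulting bounds on $n$ are extremely poor --- Ackermann-type in $k$ --- but the statement is purely qualitative, so that is acceptable.
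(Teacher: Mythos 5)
The paper does not prove this statement at all: Theorem \ref{density Hales-Jewett theorem} is quoted purely as background, with the proof attributed to Furstenberg and Katznelson (via ergodic theory) and to the Polymath project (combinatorially), so there is no in-paper argument to compare yours against. Judged on its own, your outline is a recognisable and broadly accurate summary of the Polymath density-increment strategy: induction on $k$, Sperner's theorem as the case $k=2$, passage to the equal-slices measure, a structural claim that line-free sets are governed by symbol-insensitive sets of bounded complexity, a partition of such structured sets into combinatorial subspaces of dimension tending to infinity yielding a density increment, and a bounded iteration of that increment.

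As a proof, however, this is a sketch rather than an argument, and the gap is exactly where all of the difficulty of the theorem lives. The two load-bearing lemmas are only asserted: (a) the structural step --- and note that your formulation ``$A$ is contained, up to small error, in a union of $k-1$ pairwise-insensitive sets'' is not quite the form that is actually proved and used in the Polymath argument, which obtains a relative density increment of a line-free set on an \emph{intersection} of $ij$-insensitive sets after passing to a subspace; and (b) the claim that a set built from boundedly many insensitive sets can be almost partitioned into combinatorial subspaces of large dimension, so that positive density forces a genuine increment on one of them. Neither of these follows from the inductive hypothesis by a short argument, and the equal-slices versus uniform comparisons and the repeated subspace restrictions require careful quantitative bookkeeping that you leave implicit. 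Since the paper itself simply cites the literature for this theorem, citation is the appropriate standard here; if you intend your text to stand as a proof, steps (a) and (b) would have to be carried out in full, and they constitute the bulk of the Polymath paper.
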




Since Conjecture \ref{polynomial density Hales-Jewett conjecture} implies Theorem \ref{Szemeredi’s theorem}, Theorem \ref{Bergelson-Leibman theorem} and Theorem \ref{density Hales-Jewett theorem}, and the proofs of the latter three theorems are difficult, it appears natural to attempt to look for new basic difficulties that do not arise in any of these three theorems and treat them in isolation before attempting a solution to Conjecture \ref{polynomial density Hales-Jewett conjecture}.

Taking $k=2$ in Conjecture \ref{polynomial density Hales-Jewett conjecture} is such a step. First, it does away with the structure of arithmetic progression and its accompanying difficulties, and the special cases $k=2$ of Theorem \ref{Szemeredi’s theorem}, Theorem \ref{Bergelson-Leibman theorem} and Theorem \ref{density Hales-Jewett theorem} are degenerate or at least much easier to prove. Indeed the first two follow from a double-counting argument, and the statement of the third then becomes the same as that of Sperner’s theorem \cite{Sperner}, which states that a dense subcollection of subsets of $[n]$ necessarily contains two distinct sets such that one is contained in the other (provided that $n$ is large enough). Second, the special case $k=2$ of Conjecture \ref{polynomial density Hales-Jewett conjecture}, which we are about to state, appears to on its own be a difficult problem.

\begin{conjecture}\label{case k=2 of polynomial DHJ}

Let $s, d_1, \dots, d_s$ be positive integers and let $\d>0$. Then, for $n$ large enough depending on $s, d_1, \dots, d_s, \d$ only, every subset of $\cp([n]^{d_1} \cup \dots \cup [n]^{d_s})$ that has density at least $\d$ contains all $2^d$ sets \[A \cup \bigcup_{r \in T} S^r\] where $T \subset [s]$, the unions are disjoint unions, and the sets $\emptyset \neq S \subset [n]$ and $A \subset[n]^{d_1} \cup \dots \cup [n]^{d_s}$ are common to all $2^d$ sets.

\end{conjecture}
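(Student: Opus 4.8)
The plan is to reformulate the statement as the search for a combinatorial subspace inside $\cf$, reduce to a quasirandom case by a covering argument, and then run a counting lemma; the last of these is where I expect the genuine difficulty to lie. Write $G=[n]^{d_1}\cup\dots\cup[n]^{d_s}$, set $N=|G|$, and for a non-empty $S\subseteq[n]$ put $B_r(S)=S^{d_r}\subseteq[n]^{d_r}$, $B(S)=B_1(S)\cup\dots\cup B_s(S)$ and $H(S)=G\setminus B(S)$. For such an $S$ and a set $A\subseteq H(S)$, the family of sets appearing in the statement is precisely a combinatorial subspace $\Sigma(S,A)\subseteq\cp(G)$ of dimension $s$, whose $2^s$ points arise from $A$ by independently adjoining, or not, each block $B_r(S)$. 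The goal thus becomes: every $\cf\subseteq\cp(G)$ of density $\d$ contains some $\Sigma(S,A)$ in its entirety. The case $s=1$, $d_1=1$ is Sperner's theorem, and the difficulty will come from the exponents $d_r\ge2$.

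First I would carry out the reduction to a quasirandom situation. For a non-empty $S$ and $A\subseteq H(S)$, let $q_S(A)$ be the number of members of $\Sigma(S,A)$ that lie in $\cf$; we want $q_S(A)=2^s$ for some $S$ and $A$. Summing $q_S(A)$ over $A$ counts the $B\in\cf$ constant on each block $B_r(S)$, and summing further over all non-empty $S$ rewrites the total as $\sum_{B\in\cf}w(B)$, where $w(B)$ is the number of non-empty $S$ for which $B$ is constant on every $B_r(S)$. A second-moment computation should show that for $n$ large $w(B)$ is, for all but a $o(1)$-fraction of $B$, within a factor $1+o(1)$ of its mean $2^{-N}\sum_{B}w(B)$; consequently a density-$\d$ family captures a $(\d-o(1))$-proportion of $\sum_{B}w(B)$, and averaging produces a non-empty $S$ --- one whose size can even be confined to a slowly growing window --- in whose fibration $\cf$ has at least the expected average local density, $2^{-|H(S)|}\sum_{A}q_S(A)\ge(\d-o(1))\,2^s$. (I expect the covering technique of the paper to be what makes this reduction work, the various $S$ playing the role of the covering family and $\cp(G)$ being assembled from the fibrations they carry.)

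The hard part will be to upgrade ``$\sum_{A}q_S(A)$ is large'' to ``$q_S(A)=2^s$ for some $A$''. No single $S$ can suffice once $\d<1-2^{-s}$: the subspaces $\Sigma(S,A)$ with $A\subseteq H(S)$ are $2^{|H(S)|}$ pairwise disjoint combinatorial $s$-subspaces, and a family of density arbitrarily close to $1-2^{-s}$ can meet each of them in $2^s-1$ points. One is therefore forced to exploit all values of $S$ jointly, via a counting lemma of the shape: if $\cf$ is quasirandom with respect to the box-type uniformity norm that compares, on each block $B_r(S)$, the all-$0$ and all-$1$ configurations (suitably averaged over $S$), then the weighted count $\sum_{S}\sum_{A}\prod_{T\subseteq[s]}\mathbf{1}_{\cf}\bigl(A\cup\bigcup_{r\in T}B_r(S)\bigr)$ lies within $o(1)$ of $\d^{2^s}$ times its value for a random family, and is therefore positive; combined with a structure-versus-randomness dichotomy --- a density increment onto a sub-grid $I^{d_1}\cup\dots\cup I^{d_s}$, or a coset or cylinder localisation, whenever quasirandomness fails --- this would close the argument. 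It is exactly this increment that the exponents $d_r\ge2$ make as hard as the polynomial density Hales-Jewett conjecture itself: the natural increment seems to lose all effective control, and an unconditional treatment appears to require either a Furstenberg-Katznelson-type argument over IP-systems or a hypergraph-regularity argument adapted to the cube patterns $S^{d_r}$. I therefore expect the quasirandom counting lemma, together with the density increment it relies on, to be the main obstacle, while the covering-based reduction to that quasirandom case should be within reach of present techniques.
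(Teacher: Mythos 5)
There is a genuine gap, and it is the whole core of the argument. The statement you were asked about is Conjecture \ref{case k=2 of polynomial DHJ}: it is an \emph{open} conjecture, which the paper does not prove and does not claim to prove (the paper explicitly presents it as a hard special case of Conjecture \ref{polynomial density Hales-Jewett conjecture}, and only proves weaker results around it: a distance-$2$ analogue, equivalences with relative versions, and a reduction to a quasirandom case). Your proposal does not prove it either. The first half of your plan --- fibring $\cp(G)$ over pairs $(S,A)$, running a second-moment/averaging argument over disjoint windows to find one $S$ (and in fact one outside part) on which $\cf$ retains density $\d-o(1)$ --- is essentially the paper's covering argument (Proposition \ref{variance bound} and Theorem \ref{first two results}), and it is sound; but, as you yourself observe with the $1-2^{-s}$ example, each fibre $\Sigma(S,A)$ has only $2^s$ points, so relative density $\d$ in a fibre yields only roughly $\d 2^s$ points of a subspace, never all $2^s$ of them. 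This is exactly the non-transitivity obstruction the paper discusses in Section \ref{Section: Overview of proof methods}, and it is why the covering method there only delivers distance-$2$ conclusions (Theorems \ref{distance 2 theorem} and \ref{corollary of distance 2 theorem}) or reductions, not the conjecture itself; note the paper does obtain the full statement, via this route, only in the degenerate case $d_1=\dots=d_s=1$.

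Everything that would actually close the argument --- the counting lemma relative to your proposed box-type uniformity norm, and above all the density-increment step when that quasirandomness fails --- is not carried out, and you concede it is as hard as polynomial density Hales--Jewett itself. Stating the dichotomy ``quasirandom implies count close to $\d^{2^s}$ times the random count; otherwise increment'' is a programme, not a proof: no norm is defined, no counting lemma is proved, and no increment is extracted, and for exponents $d_r\ge 2$ there is no known structure into which to increment (the paper's own quasirandomness reduction, Theorem \ref{no remarkable mod p forms reduction}, concerns linear forms mod $p$ and applies to the weaker difference conjecture, Conjecture \ref{polynomial difference conjecture}, not to Conjecture \ref{case k=2 of polynomial DHJ}). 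So the proposal should be assessed as a reasonable heuristic outline whose decisive steps are missing; it neither matches a proof in the paper (there is none) nor supplies one.
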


On the way to Conjecture \ref{case k=2 of polynomial DHJ}, we may first ask for two of the required $2^d$ sets, and to obtain two such sets it suffices to obtain the point corresponding to $T$ empty and another point. Whatever second point we require, Conjecture \ref{case k=2 of polynomial DHJ} in turn specialises as follows (with parameters $s, d_1, \dots, d_s$ that might be lower than originally), to a polynomial set difference.

\begin{conjecture}\label{polynomial difference conjecture}

Let $s, d_1, \dots, d_s$ be positive integers and let $\d>0$. Then, for $n$ large enough depending on $s, d_1, \dots, d_s,\d$ only, every subset of $\cp([n]^{d_1} \cup \dots \cup [n]^{d_s})$ that has density at least $\d$ contains a pair $(A,B)$ of distinct subsets of $[n]^{d_1} \cup \dots \cup [n]^{d_s}$ such that $A \subset B$ and \[B \setminus A = S^{d_1} \cup \dots \cup S^{d_s}\] for some $S \subset [n]$.

\end{conjecture}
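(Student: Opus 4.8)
The case $s=1$, $d_1=1$ is essentially Sperner's theorem: for $n$ large, a family $\cf\subset\cp([n])$ of density at least $\d$ has more than $\binom{n}{\lfloor n/2\rfloor}$ members, hence is not an antichain, and any comparable pair $A\subsetneq B$ that it contains has $B\setminus A$ equal to a necessarily nonempty subset $S$ of $[n]$. The entire content of the conjecture is therefore to force the difference $B\setminus A$ into the rigid product shape $S^{d_1}\cup\dots\cup S^{d_s}$, with a single $S$ governing all $s$ parts at once. The plan is to use the covering technique to peel away everything that is not this core difficulty, and then to meet the core difficulty in as clean a form as possible.

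The first move I would make is to reduce to a \emph{relative} version in which $\cf$ may be assumed not merely dense but regularly distributed on a combinatorial subcube. I would write $\cp([n]^{d_1}\cup\dots\cup[n]^{d_s})$ as a union of combinatorial subcubes, each obtained by restricting every part $[n]^{d_i}$ to $I^{d_i}$ for a common subset $I\subset[n]$ of suitably chosen size and then freezing the remaining coordinates, and pigeonhole $\cf$ onto one of these subcubes on which it is dense and well-distributed with respect to the data --- layer sizes within the parts, sizes of the fibres of the coordinate projections, and so on --- that controls how many copies of the pattern can occur. A product set in such a subcube remains a product set in the full ground set, so a pattern produced inside a subcube lifts to a genuine pattern; and the same covering bookkeeping is what makes the ``distance $2$'' analogue --- a chain $A\subset C\subset B$ with $C\setminus A$ and $B\setminus C$ each of product shape --- reduce directly to the distance-$1$ statement.

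Next I would push this to a \emph{quasirandom} version by a density increment against the statistics just listed: so long as $\cf$ correlates nontrivially with a bounded-complexity function of those statistics, one conditions on a bounded amount of data, or passes to a further subcube, on which the density of $\cf$ strictly increases, and since the density never exceeds $1$ this terminates. What remains is a family pseudorandom with respect to all of the relevant quantities, and for such a family I would estimate the number of pairs $(A,B)\in\cf^2$ with $B\setminus A=S^{d_1}\cup\dots\cup S^{d_s}$ for some nonempty $S$; pseudorandomness should force this count to agree, up to a factor $1+o(1)$, with its value when $\cf$ is replaced by a random set of the same density, which is strictly positive. Since the covering and relative reductions cost only bounded factors, a positive count here would finish the proof.

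The hard part, and the step at which I expect the covering technique to be exhausted, is the quasirandom case itself --- equivalently, the problem of certifying that the statistics incremented against above really are the only obstructions to having the expected number of patterns. Already for $s=1$, $d_1=2$, where $\cf$ is a dense family of $0/1$ matrices and the pattern is the combinatorial square $S\times S$, one needs an inverse theorem describing which such families fail to contain the expected number of square differences; this is in substance Conjecture \ref{case k=2 of polynomial DHJ} with one further point adjoined, hence of genuine density Hales-Jewett difficulty, and no such inverse theorem is currently known. I therefore expect the covering method to reduce Conjecture \ref{polynomial difference conjecture} to this pseudorandom counting problem, and to settle its distance-$2$ and relative forms outright, while the pseudorandom problem itself still awaits a new idea.
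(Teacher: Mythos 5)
The statement you were asked to prove is Conjecture~\ref{polynomial difference conjecture}, which is an open conjecture: the paper does not prove it, and neither do you. Your text is candid about this --- you end by saying the pseudorandom counting problem ``still awaits a new idea'' --- so there is no pretence of a complete argument, but for the record the gap is exactly there, and it is the entire content of the statement. The covering technique, both in your sketch and in the paper, only yields the peripheral results: the paper proves a distance-$2$ analogue (Theorems~\ref{distance 2 theorem} and~\ref{corollary of distance 2 theorem}, where the common set $U$ need not lie in the family, so transitivity is never needed), an equivalence with relative versions (Theorem~\ref{statements equivalent to the polynomial difference conjecture}), and a reduction to a quasirandom case (Theorem~\ref{no remarkable mod p forms reduction}); none of these, singly or combined, implies the conjecture. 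Your outline tracks these three results quite closely in spirit, which is a correct reading of what the method can do, but the final step you would need --- a counting lemma saying that a family quasirandom with respect to your chosen statistics contains roughly the expected number of power-difference pairs --- is not known, and nothing in your proposal supplies it.

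Two further points of comparison with the paper's actual reductions. First, the quasirandomness the paper obtains is only with respect to linear forms $\F_p^n \to \F_p$ (via the linear forms $\Phi$ of \eqref{definition of Phi in terms of phi}), a very restricted family of statistics chosen because the covering collections $\cc(X_i,U)$ can be made level sets of $\Phi$ while remaining isomorphic to a full power set $\cp([m]^d)$; your proposed increment ``against layer sizes, fibre sizes of coordinate projections, and so on'' is vaguer, and for an arbitrary such statistic you would need to check the analogues of properties (v)--(vii) of Section~\ref{Section: Overview of proof methods} (constancy on each cell, matching distribution, and isomorphism of each cell to a smaller instance), which is precisely the nontrivial content of Proposition~\ref{three properties} and does not come for free. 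Second, your remark that the distance-$2$ statement ``reduces directly to the distance-$1$ statement'' has the logic backwards: the distance-$2$ theorem is proved unconditionally by the covering argument (two elements of the same cell $\cc(I,U)$ automatically give the configuration), and it does not follow from, nor reduce to, the unproven distance-$1$ conjecture. So the honest summary is: your proposal correctly identifies the reduction programme the paper carries out, but it proves none of the reductions in detail and leaves open exactly what the paper leaves open, namely the conjecture itself.
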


Taking $s=1$, $d_1=2$ we recover the following special case of Conjecture \ref{polynomial density Hales-Jewett conjecture} and Conjecture \ref{polynomial difference conjecture}, which is a conjecture of Gowers \cite{Gowers} that requires a square difference.

\begin{conjecture}\label{square difference conjecture}

Let $\d>0$. Then, for $n$ large enough depending on $\d$, every subset $\ca$ of $\cp([n]^2)$ that has density at least $\d$ contains a pair $(A,B)$ of distinct subsets of $[n]^2$ such that $A$ is contained in $B$ and $B \setminus A = S^2$ for some $S \subset [n]$.

\end{conjecture}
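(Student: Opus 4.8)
The plan is to run a density‑increment argument whose genuinely new ingredient is an averaging step over the difference sets $S^2$ of \emph{every} size. A warning that shapes the strategy: squares with $|S|$ bounded do not suffice. If one only asked for $|S|=1$, then $S^2=\{(i,i)\}$ is a single diagonal coordinate, and $\ca=\{A\subseteq[n]^2:\sum_{i\in[n]}1[(i,i)\in A]\text{ even}\}$ has density $1/2$ yet contains no admissible pair; more generally, a mod‑$p$ construction supported on the diagonal keeps density $\Omega_k(1)$ while forbidding every pair with $|S|\le k$. So one must use the full range $1\le|S|\le n$, and the natural object is the normalised count
\[
N(\ca)\;=\;\E_{\emptyset\neq S\subseteq[n]}\;\E_{A\subseteq[n]^2\setminus S^2}\;1_{\ca}(A)\,1_{\ca}(A\cup S^2),
\]
where for fixed $S$ the inner expectation is over the uniform measure on subsets $A$ disjoint from $S^2$. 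The normalisation is chosen so that, averaged over a uniformly random $S$ of each fixed size, the laws of $A$ and of $A\cup S^2$ have uniform coordinate marginals on $\cp([n]^2)$; hence a ``random‑like'' $\ca$ of density $\d$ should satisfy $N(\ca)=\d^2+o(1)$, and in particular $N(\ca)>0$, which is exactly the conclusion sought.

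The dichotomy would then read as follows. Either $1_{\ca}$ is quasirandom in the sense that its correlations with indicator functions of the coordinate blocks that a square $S^2$ can probe — a cut/box‑norm‑type condition on the Boolean lattice $\cp([n]^2)$, localised to $S^2$ and averaged over $S$ — are $o(1)$, in which case the Cauchy‑Schwarz expansion of $N(\ca)$ collapses to $\d^2+o(1)>0$ and we are done; or $\ca$ attains density at least $\d+c(\d)$ on some sub‑structure. For the increment to iterate I would want that sub‑structure to again be (a small perturbation of) a copy of $\cp([m]^2)$: the two most plausible mechanisms are to condition on the restriction of $\ca$ to a correlated block of coordinates, or to pass to a sub‑grid $S_0\times S_0$ and restrict $\ca$ to subsets contained in it. Either way the ambient dimension drops from $n^2$ to roughly $m^2$ with $m$ only slightly smaller than $n$, the density rises, and after $O(\d^{-1})$ steps one reaches density close to $1$, which forces the pattern — this is where the hypothesis that $n$ be large depending on $\d$ is consumed.

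The hard part — and, I expect, the crux of the problem — is making this dichotomy honest: finding a notion of quasirandomness simultaneously strong enough to pin $N(\ca)$ down and weak enough that its failure returns a density increment on a structure of the same shape. The obstacle specific to squares is that $S^2\cap(S')^2=(S\cap S')^2$, accompanied by the mixed rectangles $S\times S'$ and $S'\times S$, so the derivatives of $1_{\ca}$ produced by Cauchy‑Schwarz are indexed by an uncontrolled two‑parameter family rather than by a single shift as in the arithmetic or Hales‑Jewett settings, and the regularity/energy bookkeeping does not close on its own; moreover extracting a sub‑grid $S_0\times S_0$ (as opposed to an arbitrary rectangle $S_1\times S_2$) from a cut‑norm correlation is itself delicate. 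This is precisely the gap the covering technique of this paper is meant to fill: replacing the rigid family $\{S^2:\emptyset\neq S\subseteq[n]\}$ by a richer covering family of difference patterns for which the increment step, and the passage back to a $\cp([m]^2)$‑type structure, can be carried out uniformly. I would therefore expect essentially all of the real work to lie in setting up that covering‑assisted increment and verifying that it terminates.
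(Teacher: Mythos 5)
There is a genuine gap, and it is the whole problem: the statement you are proving is Conjecture \ref{square difference conjecture}, which is an open conjecture of Gowers that this paper does not prove. The paper's contributions are deliberately weaker statements surrounding it --- a distance-$2$ analogue (Theorems \ref{distance 2 theorem} and \ref{corollary of distance 2 theorem}), equivalences with relative/symmetric/hypergraph formulations (Theorem \ref{statements equivalent to the polynomial difference conjecture}), and a reduction to the case where no mod-$p$ linear form distinguishes $\ca$ from $\cp([n]^d)$ (Theorem \ref{no remarkable mod p forms reduction}). Your proposal is an outline of a density-increment attack in which the decisive steps are exactly the ones you defer: you never specify a quasirandomness notion for which a counting lemma holds (i.e.\ for which quasirandomness of $\ca$ forces $N(\ca)\ge c(\d)>0$), nor an increment lemma showing that failure of that notion yields density at least $\d+c(\d)$ on a substructure isomorphic to some $\cp([m]^2)$, nor that these two are complementary. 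You say so yourself (``the hard part \dots is making this dichotomy honest''), and your final paragraph assumes the covering technique of this paper fills that gap; it does not. The covering argument gives density \emph{preservation} on windows isomorphic to $\cp([m]^2)$ (Theorem \ref{first two results}) and a density \emph{increment} only when a specific, rather weak family of obstructions (linear forms $\F_p^n\to\F_p$) distinguishes $\ca$ (Proposition \ref{density increment}); no one has a counting lemma asserting that absence of these (or any other known) obstructions produces even one square difference pair, which is precisely why the conjecture is open.

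Two smaller points. First, your normalisation claim is not right as stated: for $S$ uniform of a fixed size $k$ that is a constant proportion of $n$, the law of $A\cup S^2$ has coordinate marginals bounded away from $1/2$ (each off-diagonal coordinate lies in $S^2$ with probability about $(k/n)^2$), so the heuristic $N(\ca)=\d^2+o(1)$ for ``random-like'' $\ca$ needs a genuinely argued counting lemma rather than a marginal computation. Second, the endgame is fine but can be said much more simply: once the density exceeds $1/2$ the pattern is forced already with $|S|=1$, by pairing each $A$ not containing a fixed diagonal point $(i,i)$ with $A\cup\{(i,i)\}$. Neither of these repairs the central missing dichotomy, so the proposal is a research plan rather than a proof.
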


Let us also mention a related conjecture from \cite{Gowers}: a graph theoretic version of Conjecture \ref{square difference conjecture} requiring clique differences.

\begin{conjecture}\label{clique difference conjecture}

Let $\d>0$. Then, for $n$ large enough depending on $\d$, every subset of the collection of \emph{non-oriented} graphs on the vertex set $[n]$ that has density at least $\d$ contains a pair $(H,G)$ of distinct graphs such that $H$ is a subgraph of $G$ and the complement of $H$ inside $G$ is a clique.

\end{conjecture}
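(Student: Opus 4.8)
The plan is to deduce Conjecture \ref{clique difference conjecture} from Conjecture \ref{polynomial difference conjecture} (equivalently from the case $s=1$, $d_1=2$, namely Conjecture \ref{square difference conjecture}) by encoding graphs as subsets of $[n]^2$ and transporting a dense family of graphs to a dense family of subsets of $\cp([n]^2)$. Wait — that is not quite right: graphs on $[n]$ form a set of size $2^{\binom{n}{2}}$, the same order as $\cp([n]^2)$ up to the diagonal and the symmetry, so "density at least $\d$" in one does translate to density at least roughly $\d$ in the other after we quotient by these identifications. The clique difference $B\setminus A$ being a clique on a vertex subset $S$ corresponds exactly to the symmetric square difference $S^2 \setminus \{\text{diagonal}\}$ in the $[n]^2$ picture; so once the translation is set up carefully the two conjectures should be essentially equivalent, and the task reduces to making that dictionary precise while controlling the density loss.

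First I would fix the bijection between non-oriented graphs on $[n]$ and subsets of the edge set $\binom{[n]}{2}$, viewed inside $[n]^2$ via $\{i<j\}\mapsto(i,j)$; a subset $\cH$ of graphs with $|\cH| \geq \d\, 2^{\binom{n}{2}}$ becomes a subset of $\cp\big(\binom{[n]}{2}\big)$ of the same relative size. Then I would check that the covering technique advertised in the abstract applies here: the obstruction to directly invoking Conjecture \ref{square difference conjecture} is that the latter is stated for $\cp([n]^2)$, which includes the $n$ diagonal points and distinguishes $(i,j)$ from $(j,i)$, whereas graphs ignore both. The remedy is to embed $\cp\big(\binom{[n]}{2}\big)$ into $\cp([n]^2)$ by a fixed rule on the diagonal and off-diagonal pairs — say, send a graph $G$ to the set $\{(i,j) : \{i,j\}\in E(G)\} \cup D$ where $D$ is the full diagonal — so that $\cH$ lifts to a family $\widetilde{\cH}\subset\cp([n]^2)$ of density at least $\d\cdot 2^{-n}\cdot$(something), which is too small. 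To avoid this exponential loss I would instead partition $\cp([n]^2)$ into $2^{n}\cdot 2^{\binom{n}{2}}$ fibres according to the diagonal and the "upper minus lower" discrepancy, observe $\widetilde{\cH}$ meets one fibre in density $\ge\d$, and run Conjecture \ref{square difference conjecture} relative to that fibre — which is exactly the "reduction to relative versions" in category (ii) of the abstract, so I would lean on whichever such reduction lemma the paper proves.

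Concretely, within a fixed fibre the problem is isomorphic to: a subset of $\cp\big(\binom{[n]}{2}\big)$ of density $\ge\d$ contains a pair $A\subset B$ with $B\setminus A = \binom{S}{2}$ for some nonempty $S\subset[n]$; and that is precisely the clique difference conjecture after undoing the bijection, since $\binom{S}{2}$ is exactly the edge set of the clique on $S$. So the real content is the inductive/covering step that upgrades an absolute density statement on $\cp([n]^2)$ to one that is relative to a coset of the subgroup of sets supported on a union of the diagonal and an antisymmetric part — and then the symmetric square difference of Conjecture \ref{square difference conjecture} restricted to a symmetric ambient set becomes a clique difference.

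**The main obstacle** I anticipate is precisely controlling the density when passing to the symmetric/diagonal-quotiented world: a naive lift loses a factor exponential in $n$, which is fatal, so the argument must be organised as a genuine \emph{relative} density statement (covering $\cp([n]^2)$ by $2^n\cdot 2^{\binom{n}{2}}$ translates of the symmetric subfamily and pigeonholing) rather than a direct substitution. A secondary subtlety is the handling of the diagonal in Conjecture \ref{square difference conjecture}: there $B\setminus A=S^2$ \emph{includes} the points $(s,s)$ for $s\in S$, whereas a clique on $S$ has no loops, so I must either quotient out the diagonal entirely at the start or show that the diagonal behaviour is "free" and can be prescribed independently of the rest — again a covering argument over the $2^n$ possible diagonal patterns. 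Once these two bookkeeping points are handled, the deduction is immediate. If, on the other hand, Conjecture \ref{square difference conjecture} is itself still open (as the excerpt suggests), then this statement would instead be proved \emph{conditionally}, or the paper's goal is the reverse reduction — showing Conjecture \ref{clique difference conjecture} follows from, or is equivalent to, one of the polynomial difference conjectures — in which case the same dictionary does the job with the roles of source and target swapped.
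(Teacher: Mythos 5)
First, a point of framing: Conjecture \ref{clique difference conjecture} is an open conjecture in this paper, not a theorem; the paper proves nothing unconditional about it, and only records (in the paragraph following its statement) that Conjecture \ref{square difference conjecture} implies it. So the best you could hope for is the conditional deduction you sketch, and your closing caveat is the correct reading of the situation.

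Within that conditional deduction, however, your mechanism has a genuine flaw. You discard the ``naive lift'' because you chose a \emph{section} (one representative per graph, with a prescribed diagonal), which indeed loses a factor $2^{n+\binom{n}{2}}$ in density; but the correct move, and the one the paper makes, is to lift by the full \emph{preimage}: let $\ca$ consist of all $A \subset [n]^2$ whose restriction to $\{(x,y): x<y\}$ is the edge indicator of some $G \in \cg$, with the diagonal and the lower triangle left completely free. This cylinder has exactly the same density as $\cg$, so Conjecture \ref{square difference conjecture} applies directly and restricting the resulting pair to the strict upper triangle yields the clique difference pair --- no fibring and no relative statement is needed. Your proposed repair, by contrast, does not work as stated: restricting to a single fibre (fixed diagonal and fixed antisymmetric part) turns the problem into a statement about dense subsets of $\cp\bigl(\tbinom{[n]}{2}\bigr)$, which, as you yourself note, \emph{is} the clique difference conjecture; invoking ``Conjecture \ref{square difference conjecture} relative to that fibre'' is therefore circular unless you can prove that relative version, and the paper's machinery gives you no licence to do so. Theorem \ref{statements equivalent to the polynomial difference conjecture}(iii) goes in the opposite direction (one favourable relative version implies the absolute statement), and the paper explicitly warns that the universally quantified relative version is false (e.g.\ $\ca_m = \{S^d: \emptyset \neq S \subset [m]\}$ contains no power difference pairs), so an absolute square difference theorem does not hand you density theorems on arbitrary fibres. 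Finally, even with the correct cylinder lift there is a small diagonal subtlety you should flag: a square difference pair with $|S|=1$ has $B \setminus A$ a single diagonal point and projects to $H = G$, so distinctness of the graphs can fail; the paper's rigorous treatment of this boundary issue is via the symmetric/hypergraph equivalences (ii), (iv), (v) of Theorem \ref{statements equivalent to the polynomial difference conjecture}, where the diagonal is carried by $1$-uniform components rather than being absorbed into the clique.
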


Throughout, we will refer to pairs $(A,B)$, (resp. $(A,B)$, resp. $(H,G)$), satisfying the conclusions of Conjecture \ref{polynomial difference conjecture} (resp. Conjecture \ref{square difference conjecture}, resp. Conjecture \ref{clique difference conjecture}) as \emph{polynomial difference pairs}, \emph{square difference pairs}, and \emph{clique difference pairs}. Furthermore, for any positive integer $d$, we will refer to a pair $(A,B)$ of distinct subsets of $[n]^d$ satisfying $A \subset B$ and $B \setminus A = S^d$ for some $S \subset [n]$ as a \emph{$d$th power difference pair}, or rather as a \emph{power difference pair} when there is no ambiguity as to the value of $d$.

Conjecture \ref{square difference conjecture} implies Conjecture \ref{clique difference conjecture}, since if $\cg$ is a set of subgraphs on the set on $[n]$ vertices, then the collection $\ca$ of subsets $A \subset [n]^2$ such that $A(x,y) = \mathbbm{1}_{\{x,y\} \in E(G)}$ whenever $1 \le x < y \le n$ for some $G \in \cg$ has the same density inside $\cp([n]^2)$ as $\cg$ has inside the collection of subgraphs with vertex set $[n]$. If that density is some fixed $\d$, then for $n$ large enough Conjecture \ref{square difference conjecture} provides a square difference pair $(A,B) \in \ca$, which, by restricting to $\{(x,y) \in [n]^2: 1 \le x < y \le n \}$ ensures that $\cg$ contains a clique difference pair.

A possible approach to Conjecture \ref{clique difference conjecture}, based on imitating the proof of Sperner’s theorem had been suggested by Gowers \cite{Gowers} and was later ruled out by Alweiss \cite{Alweiss}. More recently still, Conjecture \ref{clique difference conjecture} has been approached by Alon \cite{Alon}, within the framework of graph-codes, that is, of sets of graphs such that the symmetric difference of any two of them avoids a specified set of graphs.

\subsection{Main results}

This paper will primarily be focused around a covering argument which allows us to obtain surprisingly diverse conclusions related to the last few conjectures, and more broadly to set difference patterns.

With Conjecture \ref{polynomial difference conjecture} in sight, we may define an \emph{oriented} graph with vertex set $\cp([n]^{d_1} \cup [n]^{d_2} \cup \dots \cup [n]^{d_s})$, and join $A$ to $B$ by an edge if $(A,B)$ constitutes a square difference pair. The statement of Conjecture \ref{polynomial difference conjecture} can then be reformulated as stating that every dense subset $\ca$ of the vertex set contains (for $n$ large enough) a pair of vertices that is joined by an edge. We shall not prove this but we shall instead prove that $\ca$ must contain a pair $(A,B)$ of vertices such that for some $U$ (not necessarily in $\ca$) both $(U,A)$, $(U,B)$ are edges of the oriented graph. In particular, in the nonoriented version of the presently defined graph, the vertices $A$ and $B$ are at distance at most $2$ from one another.

In order to state the first main result that we shall prove, we first introduce some auxiliary notation. If $m$ is a positive integer, and $I = [l+1, \dots, l+m] \subset [n]$ is an interval of size $m$ for some $l \ge 0$, we define for every subset $A \subset I^{d_1} \cup \dots \cup I^{d_s}$ a subset $h_I(A)$ of $[m]^{d_1} \cup \dots \cup [m]^{d_s}$ by replacing the coordinates $l+1, \dots, l+m$ by $1, \dots, m$ respectively in the elements of $A$. Then, for every $\cf \subset \cp([m]^{d_1} \cup \dots \cup [m]^{d_s})$ we define $\cf_{I} \subset \cp(X^{d_1} \cup \dots \cup X^{d_s})$ by \[\cf_{I} = \{A \subset X^{d_1} \cup \dots \cup X^{d_s}: h_{I}(A) \in \cf\}.\]

\begin{theorem}\label{distance 2 theorem}

Let $s, d_1, \dots, d_s, m$ be positive integers, let $\d>0$ and let $\cf$ be a collection of subsets of $[m]^{d_1} \cup \dots \cup [m]^{d_s}$ with $|\cf| \ge 4 \d^{-1}$. If $n$ is large enough depending on $s,d_1,\dots,d_s,m,\d$ only, then every collection $\ca \subset \cp([n]^{d_1} \cup \dots \cup [n]^{d_s})$ with density at least $\d$ contains a pair $(A,B)$ of distinct subsets of $[n]^{d_1} \cup \dots \cup [n]^{d_s}$ such that for some $U \in \cp([n]^{d_1} \cup \dots \cup [n]^{d_s})$ (not necessarily in $\ca$) we have $U \subset A$, $U \subset B$, $A \setminus U = F_1$, $B \setminus U = F_2$ where $F_1,F_2 \in \cf_{I}$ for some interval $I \subset [n]$ of size $m$. We may instead require $F_1 \in \cf_{I_1}$ and $F_2 \in \cf_{I_2}$ for some disjoint intervals $I_1, I_2 \subset [n]$ of size $m$, leading to $A \Delta B = F_1 \cup F_2$. Alternatively, if $\cf$ is nested, then we may require $F_2 \subset F_1$, leading to $A \Delta B = F_1 \setminus F_2.$ \end{theorem}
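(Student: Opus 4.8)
The plan is to exploit the product decomposition of $\cp([n]^{d_1}\cup\dots\cup[n]^{d_s})$ induced by a large family of disjoint length-$m$ intervals, and then to combine a second-moment estimate with a pigeonhole over that decomposition. Set $q:=2^{m^{d_1}+\dots+m^{d_s}}$, so that $q=|\cp([m]^{d_1}\cup\dots\cup[m]^{d_s})|$, and for $n$ large pick $N:=\lfloor n/m\rfloor$ pairwise disjoint intervals $J_1,\dots,J_N\subset[n]$ of size $m$. Writing $X=[n]^{d_1}\cup\dots\cup[n]^{d_s}$, the ``diagonal blocks'' $J_j^{d_1}\cup\dots\cup J_j^{d_s}$ are pairwise disjoint subsets of $X$, and together with the remaining part $Z$ (tuples with coordinates spanning several of the $J_j$, or lying outside $\bigcup_j J_j$) they partition $X$; hence every $A\subset X$ is determined by its trace on $Z$ and its traces on the $N$ blocks, and via the maps $h_{J_j}$ the latter are elements of $\cf':=\cp([m]^{d_1}\cup\dots\cup[m]^{d_s})$. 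I will call these the \emph{coordinates} of $A$, and call a pair consisting of one distinguished coordinate $j$ together with a choice of all the remaining coordinates and the $Z$-trace a \emph{bucket}; there are exactly $N\cdot 2^{|X|}/q$ buckets.

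For $A\subset X$ let $w(A)$ be the number of coordinates $j$ with $h_{J_j}(A\cap(J_j^{d_1}\cup\dots\cup J_j^{d_s}))\in\cf$. Under the uniform measure on $\cp(X)$ the coordinates of $A$ are independent and uniform in $\cf'$, so $w$ is a sum of $N$ independent Bernoulli$(|\cf|/q)$ random variables, with mean $N|\cf|/q$ and variance at most $N|\cf|/q$. A Chebyshev estimate then bounds the number of sets $A$ with $w(A)<\tfrac34 N|\cf|/q$ by $16\cdot 2^{|X|}q/(N|\cf|)$, and since $|\cf|\ge 4\d^{-1}$ this is below $\tfrac12\d 2^{|X|}\le\tfrac12|\ca|$ as soon as $N>8q$ — that is, as soon as $n$ is large in terms of $s,d_1,\dots,d_s,m$ only. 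Hence at least half the members of $\ca$ satisfy $w(A)\ge\tfrac34 N|\cf|/q$, and therefore $\sum_{A\in\ca}w(A)\ge\tfrac38 N|\cf|\,|\ca|/q\ge\tfrac38 N|\cf|\d\,2^{|X|}/q\ge\tfrac32\cdot N2^{|X|}/q$, the last step using $|\cf|\d\ge 4$.

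Now, for a bucket $b$ let $\mathrm{fib}(b)\subset\cf'$ be the set of values $v$ for which the set obtained from $b$ by inserting $v$ in the distinguished coordinate belongs to $\ca$; by construction $\sum_{A\in\ca}w(A)=\sum_b|\mathrm{fib}(b)\cap\cf|$. By the previous paragraph this sum strictly exceeds the number $N2^{|X|}/q$ of buckets, so some bucket $b^*$ satisfies $|\mathrm{fib}(b^*)\cap\cf|\ge 2$. Choosing $v_1\ne v_2$ in $\mathrm{fib}(b^*)\cap\cf$, letting $A,B$ be the corresponding members of $\ca$, $I$ the distinguished interval of $b^*$, and $U$ the common value of $A$ and $B$ off the $I$-block, we obtain $U\subset A$, $U\subset B$, $A\setminus U=h_I^{-1}(v_1)$ and $B\setminus U=h_I^{-1}(v_2)$, which are distinct elements of $\cf_I$; this is the first assertion. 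If moreover $\cf$ is nested, then $v_1$ and $v_2$ are comparable, so after possibly interchanging $A$ and $B$ we have $F_2\subset F_1$ and $A\Delta B=F_1\setminus F_2$. The step I expect to be slightly delicate here is making the second-moment estimate do the right work — one needs $w$ concentrated well enough that a definite proportion of $\ca$ lies above a constant multiple of its mean, which is where the growth of $N$ with $n$ and the hypothesis on $|\cf|$ enter.

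For the statement with two disjoint intervals the same covering idea applies, now run over consecutive pairs of length-$m$ intervals (equivalently, over intervals of length $2m$, each split into two halves), with an auxiliary collection on such a block built from the patterns that place a member of $\cf$ on one half and nothing on the other half. The main obstacle of the theorem is precisely the coordination between the two intervals: a bucket-type pigeonhole as above only forces two traces of $\ca$-sets to lie in the auxiliary collection, whereas the conclusion requires one trace to be supported on the first interval and the other on the second, so that $A\Delta B$ genuinely splits as a $\cf_{I_1}$-piece together with a $\cf_{I_2}$-piece (after choosing $U$ appropriately inside the doubled block); arranging that the common fibre is rich enough to meet ``both halves'' of the auxiliary collection, rather than merely being of size at least $2$, is the point that requires real care, and it is here that the full quantitative hypothesis $|\cf|\ge 4\d^{-1}$ — as opposed to a bound like $|\cf|\ge 2$ — is genuinely used.
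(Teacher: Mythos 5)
Your proof of the first assertion (and of the nested case) is correct and is essentially the paper's argument in disguise: your ``buckets'' are exactly the paper's collections $\cc(I,U)$ of sets with a prescribed value off one window, and your combination of a Chebyshev bound on $w(A)$ with the double count $\sum_{A\in\ca}w(A)=\sum_b|\mathrm{fib}(b)\cap\cf|$ and a pigeonhole is the same second-moment-plus-averaging scheme used there (Proposition \ref{variance bound} together with Theorem \ref{first two results}). The arithmetic with $N>8q$ and $|\cf|\ge 4\d^{-1}$ checks out.

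The second assertion (disjoint intervals $I_1,I_2$ with $A\Delta B=F_1\cup F_2$) is not proved, and you have in fact pinpointed the gap yourself: with your auxiliary collection on a $2m$-block consisting of patterns that put a member of $\cf$ on one half and nothing on the other, a fibre of size at least $2$ may contain two patterns supported on the \emph{same} half, which only reproduces the first assertion and gives no splitting across two disjoint intervals. The missing idea is not a richer fibre or a stronger use of $|\cf|\ge 4\d^{-1}$; it is a different choice of auxiliary collection. The paper enumerates $\cf=\{F_1,\dots,F_l\}$, works with windows of size $ml$, and takes the auxiliary collection $\cf'=\{h_{[tm]\setminus[(t-1)m]}(F_t):t\in[l]\}$, i.e.\ it places the $t$-th element of $\cf$ in the $t$-th sub-interval of length $m$, so that the sets of $\cf'$ are pairwise disjoint and, crucially, any two \emph{distinct} elements of $\cf'$ automatically occupy two \emph{different} (hence disjoint) sub-intervals. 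Then the already-proved first assertion, applied to $\cf'$ (which has the same cardinality as $\cf$, so the hypothesis $|\cf'|\ge 4\d^{-1}$ transfers), immediately yields $F_1\in\cf_{I_1}$, $F_2\in\cf_{I_2}$ with $I_1\cap I_2=\emptyset$ and $A\Delta B=F_1\cup F_2$. In particular your closing suggestion that the coordination between the two intervals is where the quantitative hypothesis on $|\cf|$ is ``genuinely used'' is off the mark: in the paper that hypothesis plays exactly the same role as in the first assertion (forcing two members of $\ca$ into one window collection), and the coordination is handled purely structurally by the diagonal placement.
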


\begin{theorem}\label{corollary of distance 2 theorem}

Let $s, d_1, \dots, d_s, m$ be positive integers and let $\d>0$. Then, for $n$ large enough depending on $s, d_1, \dots, d_s, m, \d$ only, every subset $\ca$ of $\cp([n]^{d_1} \cup \dots \cup [n]^{d_s})$ that has density at least $\d$ contains a pair $(A, B)$ of distinct subsets such that for some $U \in \cp([n^{d_1} \cup \dots \cup [n]^{d_s})$ (not necessarily in $\ca$) we have $U \subset A$, $U \subset B$ and \begin{align*} A \setminus U &= S_1^{d_1} \cup \dots \cup S_1^{d_s} \\ B \setminus U &= S_2^{d_1} \cup \dots \cup S_2^{d_s}\end{align*} for some $S_1, S_2 \subset [n]$. We may furthermore require $S_1$, $S_2$ to both have size $m$ and $S_1$ to be disjoint from $S_2$, which provides \[A \Delta B = (S_1^{d_1} \cup S_2^{d_1}) \cup \dots \cup (S_1^{d_s} \cup S_2^{d_s}). \] Alternatively we may require $S_2$ to be a strict subset of $S_1$, which provides \[A \Delta B = (S_1^{d_1} \setminus S_2^{d_1}) \cup \dots \cup (S_1^{d_s} \setminus S_2^{d_s}).\]

\end{theorem}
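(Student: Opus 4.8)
The plan is to deduce Theorem \ref{corollary of distance 2 theorem} from Theorem \ref{distance 2 theorem} by choosing the auxiliary family $\cf$ appropriately. Fix $m$ and work inside $[m]^{d_1} \cup \dots \cup [m]^{d_s}$. For each subset $S \subset [m]$, consider the set $G_S := S^{d_1} \cup \dots \cup S^{d_s}$, and take $\cf := \{G_S : S \subset [m]\}$. Then for any interval $I \subset [n]$ of size $m$, a set $F \in \cf_I$ is precisely a set of the form $S^{d_1} \cup \dots \cup S^{d_s}$ for some $S \subset I$ (using that $h_I$ carries the product structure of the interval $I$ onto that of $[m]$). The only thing to check before applying Theorem \ref{distance 2 theorem} is the size bound $|\cf| \ge 4\d^{-1}$: the map $S \mapsto G_S$ is injective (one recovers $S$ from the ``first part'' $S^{d_1}$), so $|\cf| = 2^m$, and we simply enlarge $m$ at the outset to a value $m' := \max(m, \lceil \log_2(4\d^{-1}) \rceil)$ so that $2^{m'} \ge 4\d^{-1}$. (If one wants the conclusion to hold with the original $m$ one notes that any valid conclusion for $m'$ immediately specialises, but in fact the statement only promises \emph{some} $S_1, S_2$, so it is cleanest to just run the argument with $m'$ and relabel; alternatively, when $m$ is already large enough nothing changes.)

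With this choice of $\cf$, the first part of Theorem \ref{distance 2 theorem} gives, for $n$ large, a pair $(A,B)$ and a set $U$ with $U \subset A$, $U \subset B$, $A \setminus U = F_1$, $B \setminus U = F_2$ with $F_1, F_2 \in \cf_I$ for a common interval $I$ of size $m'$. Unwinding the definition of $\cf_I$, this says exactly $A \setminus U = S_1^{d_1} \cup \dots \cup S_1^{d_s}$ and $B \setminus U = S_2^{d_1} \cup \dots \cup S_2^{d_s}$ for some $S_1, S_2 \subset I \subset [n]$, which is the main assertion. For the ``furthermore'' clause, I would instead invoke the variant of Theorem \ref{distance 2 theorem} that allows $F_1 \in \cf_{I_1}$, $F_2 \in \cf_{I_2}$ for disjoint intervals $I_1, I_2$ of size $m'$: this produces $S_1 \subset I_1$ and $S_2 \subset I_2$, hence $S_1 \cap S_2 = \emptyset$, and $A \Delta B = F_1 \cup F_2 = (S_1^{d_1} \cup S_2^{d_1}) \cup \dots \cup (S_1^{d_s} \cup S_2^{d_s})$ exactly as claimed; to also force $|S_1| = |S_2| = m$ one should instead take $\cf$ to consist only of the $G_S$ with $|S| = m$ (then $|\cf| = \binom{m'}{m}$, which one makes $\ge 4\d^{-1}$ by enlarging $m'$), so that the conclusion automatically yields sets of the prescribed size.

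For the last clause I need the ``nested'' alternative in Theorem \ref{distance 2 theorem}, so I must check that the chosen $\cf$ is nested, i.e.\ totally ordered by inclusion. This holds if I take $\cf := \{G_{[j]} : 0 \le j \le m'\}$, the chain associated to the nested sets $\emptyset \subset [1] \subset [2] \subset \dots \subset [m']$: indeed $S \subset S'$ implies $G_S \subset G_{S'}$, so this family is a chain of length $m'+1$, and again $m'+1 \ge 4\d^{-1}$ after enlarging $m'$. Applying the nested version of Theorem \ref{distance 2 theorem} gives $F_2 \subset F_1$ with $F_1, F_2 \in \cf_I$ for an interval $I$ of size $m'$; writing $F_1 = S_1^{d_1} \cup \dots \cup S_1^{d_s}$ and $F_2 = S_2^{d_1} \cup \dots \cup S_2^{d_s}$ for the corresponding ``intervals'' $S_2 \subset S_1 \subset I$, and noting the pair is distinct so the containment is strict, we get $A \Delta B = F_1 \setminus F_2 = (S_1^{d_1} \setminus S_2^{d_1}) \cup \dots \cup (S_1^{d_s} \setminus S_2^{d_s})$, completing the proof. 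There is no real obstacle here: the entire content is in Theorem \ref{distance 2 theorem}, and the only points requiring a moment's care are (a) verifying injectivity/nestedness of the relevant $\cf$ to meet the hypotheses, and (b) bookkeeping the passage from $m$ to a slightly larger $m'$ so that $|\cf|$ exceeds $4\d^{-1}$ — both entirely routine.
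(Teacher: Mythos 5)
Your proposal is correct and follows essentially the same route as the paper: it deduces the theorem from Theorem \ref{distance 2 theorem} by taking $\cf$ to be the family of sets $S^{d_1}\cup\dots\cup S^{d_s}$ (restricted to $|S|=m$ for the disjoint size-$m$ clause, and to a nested chain for the last clause) and invoking the corresponding part of that theorem. Your explicit enlargement of the ambient window so that $|\cf|\ge 4\d^{-1}$ is a minor bookkeeping refinement of the same argument (the paper uses windows $[2m]$ and $[m]$ without dwelling on this hypothesis), not a different approach.
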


We note that this suffices to establish Conjecture \ref{polynomial density Hales-Jewett conjecture} in the case $d_1=\dots=d_s=1$ (that is, a “simultaneous Sperner theorem”), but not otherwise, because squares and higher powers of sets are not stable under any of the union, set difference, and symmetric difference operations.

The second main result that we will obtain is an equivalence between several versions of Conjecture \ref{polynomial difference conjecture}. To state it we introduce a few more notations. If $d$ is a positive integer, then we say that a subset $A \subset [n]^d$ is \emph{symmetric} if \[\mathbbm{1}_A(x_1, \dots, x_d) = \mathbbm{1}_A(x_{\sigma(1)}, \dots, x_{\sigma(d)})\] for every permutation $\sigma$ of $[d]$, and write $\cp([n]^d)_{\mathrm{Sym}}$ for the collection of symmetric subsets of $\cp([n]^{d})$. For every subset $S \subset [n]$ and every positive integer $e$ we write $K(S,e)$ for the complete $e$-uniform hypergraph on the vertex set $S$. If $s, d_1, \dots, d_s$ are positive integers then we write $\cg(K([n],d_1) \cup \dots \cup K([n],d_s))$ for the collection of disjoint unions $G_1 \cup \dots \cup G_s$ of subhypergraphs $G_i \subset K([n],d_i)$.

\begin{theorem}\label{statements equivalent to the polynomial difference conjecture}

Conjecture \ref{polynomial difference conjecture} is equivalent to each of the following statements. \begin{enumerate}[(i)]

\item Let $s,d$ be positive integers and let $\d>0$. Then, for $n$ large enough depending on $d$ and $\d$ only, every subset $\ca$ of $\cp([n]^d \cup \dots \cup [n]^d)$ that has density at least $\d$ contains a pair $(A,B)$ of distinct subsets of $[n]^d \cup \dots \cup [n]^d$ such that $A$ is contained in $B$ and $B \setminus A = S^d \cup \dots \cup S^d$ for some $S \subset [n]$, with all disjoint unions taken over $s$ copies.

\item Let $d$ be a positive integer and let $\d>0$. Then, for $n$ large enough depending on $d$ and $\d$ only, every subset $\ca$ of $\cp([n]^d)$ that has density at least $\d$ contains a pair $(A,B)$ of distinct subsets of $[n]^d$ such that $A$ is contained in $B$ and $B \setminus A = S^d$ for some $S \subset [n]$.

\item Let $d$ be a positive integer. There exists a sequence $(\ca_m)_{m \ge 1}$ of subsets $\ca_m \subset [m]^{d}$ such that for every $\d>0$, if $m$ is large enough (depending on $d$, $\d$ and on the sequence $(\ca_m)_{m \ge 1}$) then every $\ca \subset \ca_m$ with $|\ca| \ge \d |\ca_m|$ contains a pair $(A,B)$ of distinct subsets of $[m]^d$ satisfying $A \subset B$ and $B \setminus A = S^d$ for some $S \subset [n]$.

\item Let $d$ be a positive integer and let $\d>0$. Then, for $n$ large enough depending on $d$ and $\d$ only, every subset $\ca$ of $\cp([n]^d)_{\mathrm{Sym}}$ that has density at least $\d$ contains a pair $(A,B)$ of distinct subsets of $[n]^d$ such that $A$ is contained in $B$ and $B \setminus A = S^d$ for some $S \subset [n]$.

\item Let $s, d_1, \dots, d_s$ be positive integers and let $\d>0$. Then, for $n$ large enough depending on $s, d_1, \dots, d_s,\d$ only, every subset of $\cg(K([n],d_1) \cup \dots \cup K([n],d_s))$ that has density at least $\d$ contains a pair $(H,G)$ of distinct subsets of $\cg(K([n],d_1) \cup \dots \cup K([n],d_s))$ such that $H \subset G$ and \[G \setminus H = K(S,d_1) \cup \dots \cup K(S,d_s)\] for some $S \subset [n]$.

\end{enumerate}

\end{theorem}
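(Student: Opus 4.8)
The plan is to prove a web of implications linking all six statements, so that they form a single equivalence class. A first batch is immediate. Conjecture~\ref{polynomial difference conjecture} gives (i) by taking $d_1=\dots=d_s=d$; (i) gives (ii) by taking $s=1$; and (ii) gives (iii) by letting $\ca_m:=\cp([m]^d)$, for which the relative statement in (iii) is literally (ii). In the same spirit, Conjecture~\ref{polynomial difference conjecture} implies (iv) and (v) by a density-preserving padding argument of the kind used in the introduction for the clique conjecture: given a dense family of symmetric sets (resp.\ of tuples of hypergraphs), extend each member by leaving the values on the non-sorted tuples (resp.\ on the non-strictly-increasing tuples) unconstrained, while on the sorted (resp.\ strictly-increasing) tuples one still records a member of the given family; this preserves the density, and since $S^d$ is symmetric and $K(S,d_i)$ is exactly the strictly-increasing part of $S^{d_i}$, a (polynomial) difference pair of the padded family restricts back to a difference pair of the original one. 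For (v) a short separate argument is needed to ensure $|S|\ge\min_i d_i$, since for smaller $S$ the clique difference collapses to the empty set.

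The engine of the remaining implications is a covering lemma: if $e_1,\dots,e_t$ are fixed and for every $m$ we are handed a family $\cb_m\subset\cp([m]^{e_1}\cup\dots\cup[m]^{e_t})$ such that, for every $\e>0$ and every large enough $m$, every subfamily of $\cb_m$ of relative density $\ge\e$ contains a pair $(x,y)$ with $x\subset y$ and $y\setminus x=R^{e_1}\cup\dots\cup R^{e_t}$ for some $R\subset[m]$, then the conclusion of Conjecture~\ref{polynomial difference conjecture} holds for $e_1,\dots,e_t$. To prove it, cover $[n]$ by $k$ pairwise disjoint intervals $I_1,\dots,I_k$ of length $m$; for each $j$, grouping the sets of $\cp([n]^{e_1}\cup\dots)$ according to their content outside $I_j^{e_1}\cup\dots\cup I_j^{e_t}$ partitions the ambient family into columns, each a copy of $\cp([m]^{e_1}\cup\dots)$. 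Since the intervals are disjoint, the events ``the $I_j$-portion of $A$, translated to $[m]$, lies in $\cb_m$'' are mutually independent, each of probability $\rho_m:=|\cb_m|/2^{m^{e_1}+\dots+m^{e_t}}$; choosing $k$ of order $\rho_m^{-1}\log(1/\d)$ forces a $\d/2$-proportion of any density-$\d$ family $\ca$ to be active on some $I_j$. Pigeonholing in $j$ loses a factor $k$, and then averaging over the columns of that $I_j$ gains a factor $\rho_m^{-1}$; the two cancel up to the $\log(1/\d)$ factor, so some column contains a subfamily of $\cb_m$ of relative density at least a quantity $\e_0(\d)$ depending on $\d$ alone. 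Applying the hypothesis there, with $I_j$ in the role of $[m]$ and $S$ the image of $R$, produces the pair. Taking $\cb_m:=\ca_m$ from (iii) gives (iii)$\Rightarrow$(ii), and taking $\cb_m:=\cp([m]^d)_{\mathrm{Sym}}$ gives (iv)$\Rightarrow$(ii).

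It remains to close the loop back to Conjecture~\ref{polynomial difference conjecture}, and here I would use the decomposition of $[n]^d$ along coordinate-equality patterns: the stratum of tuples whose equality pattern has $e$ classes is a disjoint union of $e!$ copies of $K([n],e)$, compatibly with intersecting by $S^d$. Thus $[n]^d$, with the operation $S\mapsto S^d$, is canonically a disjoint union of copies of the various $K([n],e)$ on which $S^d$ becomes the matching union of copies of $K(S,e)$; likewise $[n]^{d_1}\cup\dots\cup[n]^{d_s}$ with $S\mapsto S^{d_1}\cup\dots\cup S^{d_s}$ becomes a disjoint union of complete hypergraphs with its clique-difference operation --- exactly a structure covered by (v), so (v)$\Rightarrow$Conjecture~\ref{polynomial difference conjecture} with no further work. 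In the other direction, for $D$ large enough in terms of $s$ and $d_1,\dots,d_s$ the multiset of hypergraph pieces extracted from $[n]^D$ dominates the one extracted from $[n]^{d_1}\cup\dots\cup[n]^{d_s}$, so the latter embeds into $[n]^D$ compatibly with the power operations; padding the unused pieces freely keeps the density, and an application of (ii) for the single exponent $D$ yields a power difference pair which, because $S^{d_i}\ne\emptyset$ whenever $S\ne\emptyset$, meets the embedded part and decodes to a polynomial difference pair with a common $S$. This gives (ii)$\Rightarrow$Conjecture~\ref{polynomial difference conjecture}, and combining everything yields the full equivalence.

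The main obstacle is the covering lemma, and specifically the quantitative tension in the choice of $k$: it must be large enough that independence across the disjoint intervals makes a density-$\d$ family almost surely active somewhere, yet small enough that after losing a factor $k$ to the pigeonhole over intervals the averaging over columns still leaves a column in which $\ca$ has relative density inside $\cb_m$ bounded below purely in terms of $\d$ --- not in terms of the possibly minuscule relative size $\rho_m$ of $\cb_m$ inside $\cp([m]^{e_1}\cup\dots)$, over which the hypothesis gives us no control. A secondary, bookkeeping-heavy point is carrying out the coordinate-equality decomposition precisely (matching strata between the single-exponent and mixed-exponent pictures, and reconciling ordered tuples with unordered hyperedges), and disposing of the degenerate case in which the difference collapses, which matters only for the hypergraph formulation (v).
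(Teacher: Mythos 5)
Your engine and two of the deductions are fine: the covering lemma is essentially the paper's own key tool (the paper proves it with a Chebyshev/variance bound yielding relative density $\d-\e$ in some column, while your one-active-window bookkeeping with $k\approx\rho_m^{-1}\log(1/\d)$ yields $\approx\d/\log(1/\d)$, which is still a function of $\d$ alone, so the ``tension'' you worry about in your last paragraph is already resolved by your own cancellation $k\rho_m\approx\log(1/\d)$), and it correctly gives (iii)$\Rightarrow$(ii) and (iv)$\Rightarrow$(ii); your (v)$\Rightarrow$Conjecture~\ref{polynomial difference conjecture} via the full coordinate-equality stratification is also correct, and is in fact more direct than the paper's detour through (iv). The genuine gap is the step you rely on to get from (ii) (and hence from (i), (iii)) back to Conjecture~\ref{polynomial difference conjecture}: the claimed embedding of $[n]^{d_1}\cup\dots\cup[n]^{d_s}$ into a single $[n]^D$ ``compatibly with the power operations''. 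Compatibility means an injection $\iota$ such that $\iota(x)\in S^D$ if and only if every coordinate of $x$ lies in $S$, for \emph{every} $S\subset[n]$; taking $S$ to be the coordinate set of $x$ and then of $\iota(x)$ forces $\iota(x)$ to have exactly the same coordinate set as $x$. In particular each diagonal point $(a,\dots,a)$ of each part $[n]^{d_j}$ must map to the diagonal point $(a,\dots,a)$ of $[n]^D$, so for $s\ge 2$ the $s$ diagonals collide and no injective compatible embedding exists; equivalently, your multiset domination fails at uniformity $e=1$, since the source contains $s$ copies of $K([n],1)$ but $[n]^D$ contains only one, however large $D$ is. Without injectivity the free padding does not preserve density (only families constant on the collided points are encodable) and the decoding of a power difference pair is ill-defined, so (ii)$\Rightarrow$Conjecture~\ref{polynomial difference conjecture} is not established, and with it the whole loop from (i)--(iii) back to the conjecture breaks.

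The repair is a two-step route that your own covering lemma nearly hands you, and is what the paper does. First, (ii)$\Rightarrow$(i): apply the covering lemma inside $\cp([m]^{d}\cup\dots\cup[m]^{d})$ with $\cb_m$ the diagonal family $\{A\cup\dots\cup A:\ A\subset[m]^d\}$, which is isomorphic to $\cp([m]^d)$ compatibly with differences of the form $R^d\cup\dots\cup R^d$, so (ii) verifies the hypothesis. Second, (i)$\Rightarrow$Conjecture~\ref{polynomial difference conjecture}: embed each $[n]^{d_j'}$ into its \emph{own} copy of $[n]^{d}$ by repeating the first coordinate, $(x_1,\dots,x_{d_j'})\mapsto(x_1,\dots,x_1,x_2,\dots,x_{d_j'})$; this satisfies $S^{d}\cap\mathrm{image}=$ image of $S^{d_j'}$ and sidesteps the diagonal collision because the copies stay separate. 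Finally, you flag but do not resolve the degenerate case $|S|<\min_i d_i$ in Conjecture~\ref{polynomial difference conjecture}$\Rightarrow$(v), where the clique difference is empty and $H=G$ fails distinctness; as written that direction is therefore also incomplete (the paper's own deduction is silent on the same point), though this is a much smaller issue than the missing return implication above.
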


Among the statements in the equivalence above, we emphasise (iii), which states that it suffices for \emph{one} relative version of the power difference statement (ii) to hold for Conjecture \ref{polynomial difference conjecture} to hold. It may seem surprising at first that we seek to reduce the “absolute” version (ii) \emph{to} this relative version, as it is has often been the case that a relative version of a density theorem is reduced \emph{to} the absolute version (through some kind of transference principle), with this reduction going in the direction opposite to the one that we are drawing attention to. (For instance, the reduction of dense subsets of the primes to dense subsets of the integers is one of the fundamental ideas behind the proof by Green and Tao \cite{Green and Tao} that dense subsets of the primes contain arbitrarily long arithmetic progressions.) Taking note of (iv) as a special case of (iii) however sheds light on how (iii) may be meaningfully used, by allowing us to place essentially arbitrary additional requirements on the dense subsets of $\cp([n]^d)$ that we work with (in the case of (iv), that the sets are symmetric). Unsurprisingly, not all such additional requirements are helpful, and correspondingly in the equivalence between (ii) and (iii) we cannot strengthen the existential quantifier in front of $(\ca_m)$ to a universal quantifier: for instance, if in (iii) we take $\ca_m$ to be the family \[\{S^d: \emptyset \neq S \subset [m]\}\] then for every $d \ge 2$ there are no power difference pairs in $\ca_m$. Rather, the challenge is to find subsets $\ca_m \subset \cp([m]^d)$ with size tending to infinity with $m$ and such that for $m$ large it becomes substantially easier to prove the existence of power differences in dense subsets of $\ca_m$ than in dense subsets of $[m]^d$.

To prove that the graph-theoretic statement (v) is equivalent to Conjecture \ref{polynomial difference conjecture} it will be convenient for us to go through (iv) as this will lead to a proof that is somewhat simpler to write, but the proof of this equivalence can also be done by directly generalising the deduction that we give and without using any of (i)-(iv). Nonetheless, we note that the equivalence between (ii) and (iv) shows in particular that the difference between the square difference and clique difference conjectures, Conjecture \ref{square difference conjecture} and Conjecture \ref{clique difference conjecture}, is smaller than it might appear at first. Conjecture \ref{square difference conjecture} does not reduce to Conjecture \ref{clique difference conjecture} because of the diagonal, but it does reduce to a slight modification of the latter where graphs are no longer loopless and a clique is defined to include loops at all of its vertices. Moreover, the equivalent statements (i) and (v) are respectively extensions of Conjecture \ref{square difference conjecture} and Conjecture \ref{clique difference conjecture}.

Finally, our third main result will involve reducing Conjecture \ref{polynomial difference conjecture} to the case where linear forms $\F_p^n \to \F_p$ with $p$ some small prime integer do not by themselves suggest an unexpected number of polynomial difference pairs. Having previously reduced Conjecture \ref{polynomial difference conjecture} to the case $s=1$ (in (i), Theorem \ref{statements equivalent to the polynomial difference conjecture}), we will restrict ourselves to the case of power difference pairs (rather than polynomial difference pairs).

Linear forms $\F_p^n \to \F_p$ have a strong effect on how likely a pair $(A,B)$ is to constitute a power difference pair. If $d \ge 2$ is an integer, $p$ is a prime, and $\phi: \F_p^n \to \F_p$ is a linear form, defined by \[\phi(x) = a_1 x_1 + \dots + a_n x_n\] for every $x \in \F_p^n$ then we can assign to $\phi$ another linear form $\Phi: \F_p^{[n]^{d}} \to \F_p$ defined by \begin{equation} \Phi(x) = \sum_{i_1, \dots, i_d \in [n]} a_{i_1} \dots a_{i_d} x_{(i_1, \dots, i_d)} \label{definition of Phi in terms of phi} \end{equation} for every $x \in \F_p^{[n]^{d}}$. If $A \subset [n]^d$ then we can define $\Phi(A)$ by identifying $A$ with the indicator function $1_A$ modulo $p$, that is, we define \[\Phi(A) = \sum_{i_1,\dots,i_d \in [n]} a_{i_1} \dots a_{i_d} 1_A(i_1, \dots, i_d)\] and likewise define $\phi(S)$ when $S \subset [n]$ by identifying $S$ with the indicator function $1_S$ modulo $p$. If $A,B$ are subsets of $[n]^d$ such that $A \subset B$ and $B \setminus A = S^d$ for some non-empty $S \subset [n]$, then in particular \[\Phi(B) - \Phi(A) = \Phi(B \setminus A) = \Phi(S^d) = \phi(S)^d\] is a quadratic residue modulo $p$. 

One can go further still: if $u$,$v$ are elements of $\F_p$, $(A,B)$ is a power difference pair (with $A \neq B$ as usual) selected uniformly at random, and $\cs$ is the resulting distribution of the set $S$ in the partition $B = A \cup S^d$, then the probability $P_{u,v}$ that $\Phi(A) = u$ and $\Phi(B) = v$ is equal to \[\E_{S \sim \cs} \mathbbm{1}_{\phi(S)^d = v-u} \P_{A \subset [n]^d \setminus S^d} (\Phi(A) = u).\] For each $1 \le k \le n$, the number of power difference pairs $(A,B)$ with the partition $B = A \cup S^d$ and with $|S|=k$ is equal to $\binom{n}{k} 2^{n^d-k^d}$; for $n$ large, all but a proportion of them that is exponentially small in $n$ satisfy $k \le C(d) \log n$ for some constant $C(d)$ depending only on $d$, so as we will discuss formally later (in Proposition \ref{small support or approximately uniform}), if $\phi$ depends on (say) at least $2C(d) \log n$ coordinates then $P_{A,B}$ becomes approximately proportional to the number of elements of $\F_p$ that that have a $d$-th power equal to $v-u$. For instance, if $d=2$, then that is $1$ if $v-u=0$, $0$ if $v-u \neq 0$ is a quadratic non-residue, and $2$ if $v-u \neq 0$ is a quadratic residue modulo $p$.

As density theorems and their proofs have a long history (starting, perhaps, with Roth’s theorem and its proof \cite{Roth}), of involving a counting lemma which states that assuming that some relevant class of anomalies is avoided, one may find approximately the desired number of instances of the structure that one is looking for, it is of interest to reduce Conjecture \ref{square difference conjecture} to the case where no linear form $\phi: \F_p^n \to \F_p$ leads to a distribution of $\Phi(A)$ with $A \in \ca$ that is substantially different from the distribution of $\Phi(A)$ with $A \in \cp([n]^2$). This is analogous to how in Roth’s proof, a substantial portion of the proof is devoted to reducing to the case where a subset of the integers has no large Fourier coefficients besides the zeroth coefficient.

We now state our third main result. It will show that in attempting to prove (or disprove) Conjecture \ref{polynomial difference conjecture} we may assume that $\ca$ is not substantially distinguishable by linear forms $\F_p^n \to \F_p$ from the entire collection of subsets of $[n]^{d_1} \cup \dots \cup [n]^{d_s}$.

\begin{theorem} \label{no remarkable mod p forms reduction}

For every $\eta>0$, Conjecture \ref{polynomial difference conjecture} is equivalent to the statement $(S_{\eta})$ below. \begin{enumerate}\item [$(S_{\eta})$] Let $d$ be a positive integer and let $\d \in (0,1]$. Then for $n$ large enough depending on $d, \d$ and $\eta$ only, every subset $\ca$ of $\cp([n]^{d})$ that has density at least $\d$ and satisfies \[|\P_{A \in \ca} (\Phi(A)=y) - \P_{A \in \cp([n]^{d})} (\Phi(A)=y)| \le \eta \] for every linear form $\phi:\F_p^n \to \F_p$ and every $y \in \F_p$ contains a pair $(A,B)$ of distinct subsets of $[n]^{d}$ such that $A \subset B$ and $B \setminus A = S^{d}$ for some $S \subset [n]$. \end{enumerate}

\end{theorem}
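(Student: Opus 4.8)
\emph{Easy direction and reduction.} If Conjecture~\ref{polynomial difference conjecture} holds, then part~(ii) of Theorem~\ref{statements equivalent to the polynomial difference conjecture} says that for $n$ large every subset of $\cp([n]^d)$ of density at least $\d$ already contains a power difference pair; statement $(S_\eta)$ asks for this only under the extra hypothesis on the distributions of the $\Phi(A)$, so it is implied. Thus the content is the converse. Assuming $(S_\eta)$, the plan is to prove statement~(ii) of Theorem~\ref{statements equivalent to the polynomial difference conjecture}, which by that theorem is equivalent to Conjecture~\ref{polynomial difference conjecture}. So let $\ca\subseteq\cp([n]^d)$ have density at least $\d$, with $n$ large in terms of $d,\d,\eta$ and $p$.

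\emph{The density-increment loop.} The scheme is an iteration whose base case is $(S_\eta)$ itself. If $\ca$ satisfies the quasirandomness hypothesis of $(S_\eta)$ we are done. Otherwise there are a linear form $\phi:\F_p^n\to\F_p$, say $\phi(x)=\sum_i a_ix_i$ with support $\Sigma=\{i:a_i\ne 0\}$, and a value $y$ with $\P_{A\in\ca}(\Phi(A)=y)>\P_{A\in\cp([n]^d)}(\Phi(A)=y)+\eta$ (replacing $y$ if necessary this costs at most a factor $p$ in $\eta$, harmlessly). Replacing $\ca$ by $\ca\cap\Phi^{-1}(y)$ multiplies the density, now measured inside the level set $\Phi^{-1}(y)$, by at least $1+\eta$; and since $\Phi(A\cup S^d)=\Phi(A)+\phi(S)^d$, every power difference pair lying in $\ca\cap\Phi^{-1}(y)$ is one for $\ca$ and has $\phi(S)=0$. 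Because the density rises by a factor at least $1+\eta$ per step and cannot exceed $1$, the loop terminates after a number of steps bounded in terms of $\eta$ and $\d$ only, necessarily in the quasirandom case, where $(S_\eta)$ closes the argument. To run the loop one must turn ``a dense subset of a level set $\Phi^{-1}(y)$'' back into ``a dense subset of a cube $\cp([n']^d)$ with $n'$ still large''.

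\emph{Returning to a cube.} Partition $[n]^d$ into the parts $E_Y=\{\vec i:i_j\in\Sigma\iff j\in Y\}$ over $Y\subseteq[d]$. The constraint $\Phi(A)=y$ only touches $E_{[d]}=\Sigma^d$, so $\Phi^{-1}(y)\cong\big(\prod_{Y\ne[d]}\cp(E_Y)\big)\times\cw$ with $\cw=\{\tau\subseteq\Sigma^d:\Phi(\tau)=y\}$, a Boolean cube carrying one nonzero linear constraint modulo $p$. If $|\Sigma|\le n/2$, fix the coordinates in $\bigcup_{Y\ne\emptyset}E_Y$ to a value on which $\ca\cap\Phi^{-1}(y)$ is at least as dense as on average; this leaves a dense subset of $\cp(E_\emptyset)\cong\cp([n-|\Sigma|]^d)$, a cube of dimension at least $n/2$, and any power difference pair there uses $S\subseteq[n]\setminus\Sigma$ (so $\phi(S)=0$) and lifts back, so we recurse. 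The hard case is $|\Sigma|>n/2$, in the extreme $\Sigma=[n]$ with $\Phi^{-1}(y)=\cw$ containing no coordinate subcube at all. Here I would cover $\cw$ by cubes coming from partitions of $\Sigma$ into $\phi$-annihilating blocks: by the Erd\H{o}s--Ginzburg--Ziv theorem all but fewer than $2p$ elements of $\Sigma$ split into blocks $B_1,\dots,B_{n'}$ ($n'\ge|\Sigma|/p-2$) of size $p$ with $\sum_{i\in B_r}a_i=0$, and sending $D\subseteq[n']^d$ to $\bigcup_{\vec j\in D}(B_{j_1}\times\cdots\times B_{j_d})$, adjoined to a fixed correction set realising $y$, is a power-difference-preserving injection $\cp([n']^d)\hookrightarrow\cw$ sending $S^d$ to $\widetilde S^d$ with $\widetilde S=\bigcup_{r\in S}B_r$. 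Choosing the partition at random, one wants the dense subset of $\cw$ to restrict to a dense, still suitably quasirandom, subset of the cube $\cp([n']^d)$.

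\emph{The main obstacle.} This last point is the crux, and the place where the covering technique of the paper must carry real weight: one has to equip the level sets $\Phi^{-1}(y)$ with a notion of quasirandomness that is strictly stronger than equidistribution under linear forms — a dense subset of $\{\tau:|\tau|\equiv 0\}$ can avoid every block-cube while remaining indistinguishable from random by linear forms other than the all-ones one — yet simultaneously weak enough to be produced, at the end of the loop, by $(S_\eta)$ applied to genuine cubes, and stable enough to descend through the block covering of the $|\Sigma|>n/2$ case. Making this notion and these two requirements match is the heart of the matter; granting it, what remains is bookkeeping with densities and with the dimension loss per step, which is bounded multiplicatively, so that an $n$ chosen large enough at the outset survives all of the boundedly many steps.
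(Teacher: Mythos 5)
Your easy direction and the outer iteration scheme (a bounded number of density increments, terminating in the quasirandom case where $(S_{\eta})$ applies, with power difference pairs pulled back along the isomorphisms) agree with the paper, but the step that actually produces the increment is missing, and you say so yourself. Conditioning on the level set $\Phi^{-1}(y)$ and then seeking an embedded block-cube on which the restricted family is still dense cannot work in the large-support case: for $d\ge 2$, each image of your embeddings $\cp([n']^{d})\hookrightarrow\cw$ consists of sets that are constant on every product of blocks and \emph{fixed} outside the union of the block products, so it has size about $2^{(n/p)^{d}}$, and even after taking the union over all zero-sum block partitions and all correction sets one covers only an exponentially small fraction of $\cw$ (which has size about $2^{n^{d}}/p$). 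Hence a dense subset of the level set can avoid every such cube, and the ``stronger notion of quasirandomness'' that would have to be stable under the loop, survive the descent, and be delivered by $(S_{\eta})$ on genuine cubes is precisely what the proposal does not supply. So the branch $|\Sigma|>n/2$ is open in your write-up, and with it the whole converse; the Erd\H{o}s--Ginzburg--Ziv block construction is not the issue.

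The paper closes exactly this gap by never conditioning on the level set. Given $\phi$, it builds blocks $X_{i,j}$ with $\phi(X_{i,j})=0$ (singletons off the support when the support is small, otherwise $p$ elements with equal coefficient), groups them into small windows $X_i=X_{i,1}\cup\dots\cup X_{i,m}$, and covers \emph{all} of $\cp([n]^{d})$ by the collections $\cc(X_i,U)$ of sets that are blockwise constant inside the window $X_i^{d}$ and equal to an arbitrary fixed $U$ outside it (Proposition \ref{three properties}). Because the outside part $U$ is left free rather than fixed to one popular value, three things hold at once: $\Phi$ is constant on each piece; the distribution of this constant over the pieces approximates that of $\Phi(A)$ for a random $A$ (via Proposition \ref{small support or approximately uniform}); and the covering is near-regular in the variance sense of Proposition \ref{variance bound}. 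A double count (Proposition \ref{density increment}) then converts the distributional discrepancy into a single piece on which $\ca$ has relative density at least $\d(1+\eta/3p)$, and every piece is isomorphic to a genuine cube $\cp([m]^{d})$ via $B\mapsto U\cup\bigcup_{(j_1,\dots,j_d)\in B}(X_{i,j_1}\times\dots\times X_{i,j_d})$, a map that carries power difference pairs to power difference pairs because $S\subset[m]$ corresponds to $\bigcup_{j\in S}X_{i,j}$. The missing idea, in short, is to keep the $\phi$-annihilating block structure local to a small window and leave the rest of each set free, instead of conditioning globally on $\Phi^{-1}(y)$ and hunting for structured sets inside it.
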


In Section \ref{Section: Overview of proof methods} we shall sketch the basic principles behind the proofs of our main theorems, and in Section \ref{Section: Proofs of the main results} we shall write these proofs in full.

\section*{Acknowledgement}

The author is grateful to Timothy Gowers for several helpful discussions and suggestions in 2018 and 2019, and in particular for introducing him to the proof technique showing that a dense collection of subsets of $[n]$ eventually contain an interval difference for $n$ large enough.

\section{Overview of the proof methods}\label{Section: Overview of proof methods}

Suppose that we want to show that for some sequence $\Omega_n$ of “universes” indexed by $n$ and some pattern $P$ on pairs of elements (both in the same $\Omega_n$), we have that whenever $\d>0$ is a positive real number, $n$ is large enough (depending on $\d$), and $\ca$ is a subset of $\Omega_n$ with density at least $\d$ in $\Omega_n$ we can always find a pair $(A,B)$ of distinct elements of $\ca$ satisfying $P$.

Then one line of argument runs as follows. Suppose that we can, for large enough $n$, define a non-empty collection $\cw_n$ of subsets of $\Omega_n$ satisfying the four following properties.

\begin{enumerate}[(i)]
\item If $(A,B)$ is a pair of distinct elements of $\Omega_n$ which belongs to the same $\cc \in \cw_n$, then $(A,B)$ satisfies $P$.
\item Every $\cc \in \cw_n$ has the same size $K>0$.
\item Every element of $\Omega_n$ belongs to the same number $L>0$ of collections $\cw_n$.
\item The size of each $\cc \in \cw_n$ tends to infinity with $n$.
\end{enumerate} After that a double-counting argument allows us to conclude as desired: the average density \[ \E_{\cc \in \cw_n} \frac{|\ca \cap \cc|}{|\cc|}\] can by (ii) be rewritten as \[ K^{-1} |\Omega_n| \E_{\cc \in \cw_n} \E_{A \in \Omega_n} \mathbbm{1}_{A \in \ca \cap \cc}\] and then, by changing the order in which we take expectations, as \[ \d K^{-1} |\Omega_n| \E_{A \in \ca} \E_{\cc \in \cw_n} \mathbbm{1}_{A \in \cc}. \] Assumption (iii) then shows that for every $A \in \ca$ the inner expectation is equal to $L/|\cw_n|$, so we obtain \begin{equation} \E_{\cc \in \cw_n} \frac{|\ca \cap \cc|}{|\cc|} = \d (|\Omega_n| L/|\cw_n|K) = \d: \label{Average density equal to d} \end{equation} indeed both products $|\Omega_n|L$ and $|\cw_n|K$ count the number of pairs $(A,\cc) \in \Omega_n \times \cw_n$ satisfying $A \in \cc$, so the parenthetical term is equal to $1$. The identity \eqref{Average density equal to d} states that the average density of $\ca \cap \cc$ inside a random collection $\cc \in \cw_n$ is equal to $\d$, so we can in particular find some $\cc \in \cw_n$ such that the density of $\ca \cap \cc$ inside $\cc$ is at least $\d$. Provided that $n$ is large enough, we can by (iv) find a pair $(A,B)$ of two distinct elements of $\ca$ that both belong to $\cc$, and that pair hence satisfies $P$ by (i).

In many situations, we do not have (i)-(iv) in full, but weaker versions are satisfied which nonetheless suffice, especially for (ii) and (iii). For instance, we may have that most $\cc \in \cw_n$ have approximately the same size, and that most elements of $\Omega_n$ belong to approximately the same number of collections $\cc \in \cw_n$, these two properties often being established using concentration arguments. There, rather than \eqref{Average density equal to d} we instead obtain the sufficient lower bound \begin{equation} \E_{\cc \in \cw_n} \frac{|\ca \cap \cc|}{|\cc|} \ge \tau \d \label{Lower bound on the average density} \end{equation} for some $\tau > 0$ (such as $1/2$).

An example of a setting where the present proof technique works straightforwardly, with (i)-(iv) completely fulfilled is the task of showing that if $\ca$ is a subset of $\Z_2^{\Z_n}$ with density equal to some $\d>0$, and $n$ is large enough depending on $\d$, then we can find distinct $A,B \in \ca$ such that the symmetric difference $A \Delta B$ is an interval modulo $n$. For every $C \in \Z_2^{\Z_n}$ and every $y \in \Z_n$ we define the collection \[\cc(C,y) = \{C, C + \mathbbm{1}_{\{y\}}, C + \mathbbm{1}_{\{y,y+1\}}, C + \mathbbm{1}_{\{y,y+1,y+2\}}, \dots, C + 1\}.\] In this case, the common size of the collections $\cc(C,y)$ is equal to $n$, and \eqref{Average density equal to d} then shows that it suffices that $n > \delta^{-1}$ for $\ca$ to contain a desired pair $(A,B)$.

That (i) is satisfied in this setting follows from the fact that the symmetric difference of two intervals modulo $n$ is an interval modulo $n$, but many patterns $P$ are not “transitive” in the sense that is not the case that if $(A,B)$ and $(A,C)$ satisfy $P$ then $(B,C)$ satisfies $P$: in particular, whenever $d \ge 2$ it is never the case that if $A,B,C$ are subsets of $\Z_2^{[n]^d}$ such that $A \Delta B = X^d$ and $A \Delta C = Y^d$ for some non-empty distinct $X,Y \subset [n]$, then $B \Delta C = Z^d$ for some non-empty $Z \subset [n]$. Likewise if $A,B,C \in \cp([n]^2)$ satisfy $A \subset B \subset C$ with $B \setminus A = Y^d$ and $C \setminus B = X^d$ for some non-empty $X,Y \subset [n]$ then the set difference $C \setminus A$ is never of the type $Z^d$ with $Z \subset [n]$.

The requirement for transitivity is a key limitation of the method that we have described so far, but it nonetheless still provides a recipe for obtaining a distance $2$ version of the pattern. For every $A \in \Omega_n$, we define the collection $\cc(A)$ to be the collection of all $B$ such that $(A,B)$ satisfies $P$; if we can show, using the argument described so far, that $\cc(A)$ contains at least two elements $B,C$ of $\ca$, then we have that $B,C$ are distance $2$ apart for $P$, in the sense that there exists $A$ such that $(A,B)$ and $(A,C)$ both satisfy $P$. This is the basic idea behind how we will prove Theorem \ref{distance 2 theorem} and then Theorem \ref{corollary of distance 2 theorem}. (It will suffice for us to define $\cc(A)$ for some well-chosen “independent” $A \in \Omega_n$ rather than for all $A \in \Omega_n$, and we shall do so to avoid unnecessary technical complications in the proofs of these results.)

Until this point all that we have used from \eqref{Lower bound on the average density} is that (provided that $n$ is large enough) one of the $\cc \in \cw_n$ contains at least two elements of $\ca$. But \eqref{Lower bound on the average density} shows the much stronger claim that $\ca$ has density at least $\tau \d$ inside one of the $\cc \in \cw_n$. This shows that in the distance $2$ results we can not only find pairs $(A,B), (A,C) \in \Omega_n \times \ca$ satisfying $P$, but, for any integer $k$ and $n$ large enough depending on $k$,$\d$ only, pairs $(A,B_1), \dots, (A,B_k) \in \Omega_n \times \ca$.

However, having $\ca \cap \cc$ dense inside some $\cc \in \cw_n$ also allows us to obtain a very different type of statement besides distance $2$ results. Suppose that we aim for the original distance $1$ problem stated in the first paragraph of this section, and that the argument as described so far does not suffice (as we have previously discussed, this is the case whenever $P$ does not exhibit transitivity). Then a variation of the argument may allow us to impose extra structure on the elements of $\ca$.

We say that two sets $V,V'$ are \emph{isomorphic} (for a pattern $P$, which will always be clear given the context) if there exists a bijection $h: V \to V'$ such that for every pair $(A,B)$ of elements of $V$, the pair $(A,B) \in V \times V$ satisfies $P$ if and only if the pair $(h(A),h(B)) \in V' \times V'$ satisfies $P$.

Suppose that for some sequence of $\Omega_n’ \subset \Omega_n$ we know the distance $1$ result analogous to the one that we seek to prove: that is, we know that for every $\d’>0$, for $n$ large enough depending on $\d’$ only every subset $\ca’$ of $\Omega_n’$ with density at least $\d’$ contains a pair $(A,B)$ of elements satisfying $P$. If we can define a sequence $\cw_n$ of collections of subsets of $\Omega_n$ such that, for some $m$ tending to infinity with $n$, each $\cc \in \cw_n$ is isomorphic to some $\Omega_m’$, and the collections $\cw_n$ satisfy (ii), (iii), (iv), then by using \eqref{Lower bound on the average density} and taking $\d’ = \tau \d$ in the assumption above, we conclude that $\ca$ contains a pair $(A,B)$ of elements satisfying $P$. In other words, we have reduced the problem from dense subsets of $\Omega_n$ to dense subsets of $\Omega_m’$. The basic strategy behind much of the proof of Theorem \ref{statements equivalent to the polynomial difference conjecture} will be modelled on this argument, even if only a relaxed version of (iii) will hold.

Even in the second proof approach that we have just described, where we use that $\ca \cap \cc$ is dense in some $\cc \in \cw_n$ (as opposed to containing at least two elements), the lower bound on the density of $\ca \cap \cc$ inside $\cc$ that we obtain is at most the original density of $\ca$. There is however yet a third approach where we can obtain a density increment, that is, we obtain that the difference of the densities \[|\ca \cap \cc|/|\cc| - |\ca|/|\Omega_n|\] is bounded \emph{below} by a positive quantity. That third approach will provide us with a reduction of a different kind to the previous one.

Suppose that for some finite set $Y$ and some class $\cf_n$ of functions from $\Omega_n$ to $Y$, at least for $n$ large enough, we can for every $f \in \cf_n$ constitute a collection $\cw_{n,f}$ satisfying (ii), (iii), (iv) (uniformly over all $f \in \cf_n$) and furthermore satisfying the following three properties.

\begin{enumerate}[(i)]
\setcounter{enumi}{4}
\item For some $m$ tending to infinity with $n$ (uniformly over all $f \in \cf_n$) every $\cc \in \cw_{n,f}$ is isomorphic to some $\Omega_m$.
\item The function $f$ takes a constant value $f(\cc)$ on each $\cc \in \cw_{n,f}$.
\item The distribution of $f(\cc)$ when $\cc \in \cw_{n,f}$ is selected uniformly at random is the same as the distribution of $f(A)$ when $A \in \Omega_n$ is selected uniformly at random.
\end{enumerate} Then for any fixed $\e>0$, we may reduce the problem described in the first paragraph of this section to the same problem with the extra assumption that every $f \in \cf_{n}$ satisfies \[|\P_{A \in \ca}(f(A)=y) - \P_{A \in \Omega_n}(f(A)=y)| \le \e.\] That is, we may assume that every function $f \in \cf_n$ has approximately the same distribution on $\ca$ and on $\Omega_n$. Indeed let $\e>0$, and suppose that there exists some $f \in \cf_n$ and some $y \in Y$ such that \[\P_{A \in \ca}(f(A)=y) - \P_{A \in \Omega_n}(f(A)=y) \ge \e.\] By double-counting, this then ensures that \begin{align*} |\ca \cap \cc|/|\cc| & \ge \d (\P_{A \in \Omega_n}(f(A)=y)+\e)/(\P_{A \in \Omega_n}(f(A)=y)) \\& \ge \d(1+\e)\end{align*} for some fixed $\cc \in \cw_n$. (As in the previous arguments, we first establish this lower bound with an expectation over all $\cc \in \cw_n$ on the left-hand side.) Let $m$ be as in (v) such that the collection $\cc$ is isomorphic to some $\Omega_m$. Letting $n_1=m$, the intersection $\ca \cap \cc$ is then isomorphic to some $\ca^{(1)} \subset \Omega_{n_1}$ with density at least $\d (1+\e)$ in $\Omega_{n_1}$. We then iterate further as long as we can, with a function $f$ that is allowed to change at each iteration, obtaining successive pairs $(\ca^{(j)}, \Omega_{n_j})$ with $|\ca^{(j)}| \ge \d (1+\e)^j |\Omega_{n_j}|$. Because the densities $|\ca^{(j)}|/|\Omega_{n_j}|$ are all at most $1$, the number $t$ of iterations is at most $\log(\d^{-1})/\log(1+\e)$ and we ultimately obtain $(\ca^{(t)}, \Omega_{n_t})$ such that \[|\P_{A \in \ca^{(t)}}(f(A)=y) - \P_{A \in \Omega_{n_t}}(f(A)=y)| \le \e\] for every $f \in \cf_{n_t}$. Because $t$ is bounded above (depending on $\d,\e$ only), and $m$ tends to infinity with $n$ (uniformly), the value $n_t$ is bounded below by some function tending to infinity with $n$ (depending on $\d,\e$ only).

\section{Proofs of the main results}\label{Section: Proofs of the main results}

In this section we provide proofs of our main theorems, Theorem \ref{distance 2 theorem}, Theorem \ref{statements equivalent to the polynomial difference conjecture}, and Theorem \ref{no remarkable mod p forms reduction}.

\subsection{The covering argument}

The main goal of this subsection is to use a covering argument to prove a theorem (Theorem \ref{first two results}) from which we will then deduce both Theorem \ref{distance 2 theorem} and Theorem \ref{statements equivalent to the polynomial difference conjecture}. Informally, we obtain that if $n,m$ are positive integers, $\ca_m$ is a non-empty subset of $\cp([m]^{d_1} \cup \dots \cup [m]^{d_s})$, $\ca$ is a subset of $[n]^{d_1} \cup \dots \cup [n]^{d_s}$ with density at least some $\d>0$, and $n$ is large enough depending on $s,d_1, \dots, d_s, m, \d$ only, then we can find a window of size $m$ such that some subcollection of $\ca$ has all its sets identical outside the window, and the collection of restrictions of this subcollection to the window still has density at least $\d/2$ inside a specified collection $\ca_m$ of possibilities chosen in advance inside that window.

Let $s \ge 1$ and let $d_1, \dots, d_s \ge 1$ be integers. We begin with a statement which asserts that if $t$,$m$ are two integers and $t$ is large enough depending on $m$, then most choices of $A \subset [n]^{d_1} \cup \dots \cup [n]^{d_s}$ will satisfy a property tested on $t$ pairwise disjoint windows of size $m$ for a roughly equal number of such windows. Remarkably, this proposition is uniform with respect to the choice of property, and it is that which makes it versatile enough to enable us to derive from it a wealth of consequences.

Recall the definition of the maps $h_I$ from the introduction in the case where $I \subset [n]$ is an interval. In Proposition \ref{variance bound} that we are about to state and prove we will consider a slight generalisation of these maps to arbitrary subsets $X \subset [n]$ and arbitrary total orderings $<$ on $X$. The reader interested only in the proofs of Theorem \ref{distance 2 theorem}, Theorem \ref{corollary of distance 2 theorem} and Theorem \ref{statements equivalent to the polynomial difference conjecture} may assume that all sets $X_r$ involved in Proposition \ref{variance bound} are intervals and that all orderings are the usual ordering on the integers, as this version will suffice for the purposes of proving these theorems. Nonetheless, we will resort to the more general version later, in order to prove Theorem \ref{no remarkable mod p forms reduction}. If $X \subset [n]$ is a set of size $m$ and $<$ is a total ordering on $X$, then writing the elements of $X$ as $a_1< \dots < a_m$ for this ordering we define for every $B \subset X^{d_1} \cup \dots \cup X^{d_s}$ a subset $h_{(X,<)}(B)$ of $[m]^{d_1} \cup \dots \cup [m]^{d_s}$ by replacing the coordinates $a_1, \dots, a_m$ by $1, \dots, m$ respectively in the elements of $B$.

Throughout Proposition \ref{variance bound}, Theorem \ref{first two results} and their proofs, all expectations, variances and sums over $A$,$A’$ will be taken over the range $[n]^{d_1} \cup \dots \cup [n]^{d_s}$.

\begin{proposition} \label{variance bound} Let $s, d_1, \dots, d_s,m,t$ be positive integers. Let $P: \cp([m]^{d_1} \cup \dots \cup [m]^{d_s}) \to \{0,1\}$ be a property of subsets of $[m]^{d_1} \cup \dots \cup [m]^{d_s}$ which is satisfied by some positive proportion $p(P)$ of subsets of $[m]^{d_1} \cup \dots \cup [m]^{d_s}$. For every $r \in [t]$ let $X_r$ be a subset of $[n]$ with size $m$ and let $<_r$ be a total ordering on $X_r$. Suppose that the sets $X_1, \dots, X_r$ are pairwise disjoint. For every $A \subset [n]^{d_1} \cup \dots \cup [n]^{d_s}$ let $N(A)$ be the number of indices $r \in [t]$ such that \begin{equation} h_{(X_r,<)}(A \cap (X_r^{d_1} \cup \dots \cup X_r^{d_s}))\label{event that is required to satisfy P} \end{equation} satisfies $P$. If $\e>0$ and $t \ge \e^{-1} p(P)^{-1}$ then \[\Var_A N(A) \le \e (\E_A N(A))^2\] when $A$ is chosen uniformly at random in $\cp([n]^{d_1} \cup \dots \cup [n]^{d_s})$. \end{proposition}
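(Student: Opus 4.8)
The plan is to show that the $t$ events ``window $r$ satisfies $P$'' are \emph{mutually independent} when $A$ is chosen uniformly at random, after which the variance bound follows immediately from the elementary fact that the variance of a sum of independent Bernoulli variables is the sum of their variances.

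First I would fix notation: for each $r \in [t]$ set $W_r = X_r^{d_1} \cup \dots \cup X_r^{d_s} \subset [n]^{d_1} \cup \dots \cup [n]^{d_s}$, and let $Y_r = Y_r(A)$ be the indicator of the event that $h_{(X_r,<_r)}(A \cap W_r)$ satisfies $P$, so that $N(A) = \sum_{r=1}^{t} Y_r$. The key structural observation is that the lifted windows $W_1, \dots, W_t$ are pairwise disjoint. Indeed, since the $X_r$ are pairwise disjoint and each $d_i \ge 1$, a $d_i$-tuple all of whose coordinates lie in $X_r$ cannot have all its coordinates in $X_{r'}$ when $r \neq r'$, so $X_r^{d_i} \cap X_{r'}^{d_i} = \emptyset$; and tuples lying in different parts $[n]^{d_i}, [n]^{d_j}$ of the disjoint union never coincide. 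Hence $W_r \cap W_{r'} = \emptyset$ for $r \neq r'$.

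Next, because $A$ is uniform over all subsets of $[n]^{d_1} \cup \dots \cup [n]^{d_s}$ and the $W_r$ are pairwise disjoint, the restrictions $A \cap W_1, \dots, A \cap W_t$ are mutually independent, with $A \cap W_r$ uniform over $\cp(W_r)$. Since $h_{(X_r,<_r)}$ is a bijection from $\cp(W_r)$ onto $\cp([m]^{d_1} \cup \dots \cup [m]^{d_s})$, the set $h_{(X_r,<_r)}(A \cap W_r)$ is uniform over $\cp([m]^{d_1} \cup \dots \cup [m]^{d_s})$, so $\E_A Y_r = p(P)$, and $Y_r$ is a function of $A \cap W_r$ only. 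Thus $Y_1, \dots, Y_t$ are mutually independent Bernoulli($p(P)$) variables, giving $\E_A N(A) = t\, p(P)$ and $\Var_A N(A) = \sum_{r=1}^{t} \Var Y_r = t\, p(P)(1 - p(P)) \le t\, p(P)$. Therefore
\[
\frac{\Var_A N(A)}{(\E_A N(A))^2} \le \frac{t\, p(P)}{t^{2} p(P)^{2}} = \frac{1}{t\, p(P)} \le \e
\]
as soon as $t \ge \e^{-1} p(P)^{-1}$, which is exactly the claimed inequality.

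There is no genuine obstacle in this argument; the one point that warrants care is the disjointness claim for the lifted windows $W_r$ rather than merely for the $X_r$, since it is precisely this that promotes ``each $Y_r$ has the correct marginal distribution'' to full mutual independence, allowing the variances to be summed directly. If one wished to avoid invoking independence as a black box, the same conclusion follows by checking directly that $\E_A(Y_r Y_{r'}) = p(P)^2$ for $r \neq r'$ (again using $W_r \cap W_{r'} = \emptyset$), so that every off-diagonal covariance term in $\Var_A N(A)$ vanishes.
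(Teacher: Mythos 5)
Your proposal is correct and follows essentially the same argument as the paper: disjointness of the lifted windows $X_r^{d_1} \cup \dots \cup X_r^{d_s}$ gives mutual independence of the $t$ Bernoulli indicators, each with parameter $p(P)$, and the ratio $\Var_A N(A)/(\E_A N(A))^2 \le 1/(t\,p(P)) \le \e$ follows. The extra detail you supply (disjointness of the lifted windows, the bijectivity of $h_{(X_r,<_r)}$ ensuring the correct marginal) is exactly what the paper leaves implicit.
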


\begin{proof} As the sets $X_1, \dots, X_t$ are pairwise disjoint, the sets \[X_1^{d_1} \cup \dots \cup X_1^{d_s}, \dots, X_t^{d_1} \cup \dots \cup X_t^{d_s}\] are also pairwise disjoint, and the events (over $r \in [t]$) that \eqref{event that is required to satisfy P} satisfies $P$ are hence jointly independent. Each of these individual events has probability $p(P)$, so the number $N(A)$ of these events has variance $tp(P)(1-p(P)$); meanwhile the expectation $\E_A N(A)$ is equal to $tp(P)$, so \[ \Var N(A) / (\E_A N(A))^2 = (1-p(P)) / t p(P) \le 1/tp(P).\] The result follows. \end{proof}

We note that if $p(P)>0$, as is the case in the assumption of Proposition \ref{variance bound}, then \[p(P) \ge 2^{-(m^{d_1} + \dots + m^{d_s})}\] and the lower bound $\e^{-1} p(P)^{-1}$ in the condition on $t$ is hence always at most \[2^{(m^{d_1} + \dots + m^{d_s})} \e^{-1};\] this lower bound is therefore (for fixed $s,d_1, \dots, d_s$) bounded above in terms of $m$ and $\e$. In turn, finding $X_1, \dots, X_t$ as required by Proposition \ref{variance bound}, and even requiring them to be intervals, is possible provided that $n \ge mt$, and is therefore in particular possible provided that $n \ge 2^{(m^{d_1} + \dots + m^{d_s})} \e^{-1} m$.

Proposition \ref{variance bound} now allows us to deduce the main result of this subsection, using a double-counting argument.

\begin{theorem}\label{first two results}

Let $s,d_1, \dots, d_s,m$ be positive integers and let $\d>\e>0$. If $n$ is large enough depending on $s,d_1, \dots, d_s, m, \d, \e$ only then whenever $\ca_m$ is some non-empty subset of $[m]^{d_1} \cup \dots \cup [m]^{d_s}$ and $\ca \subset \cp([n]^{d_1} \cup \dots \cup [n]^{d_s})$ has density at least $\d$, there exists an interval $I \subset [n]$ with size $m$ and a subset \[U \subset ([n]^{d_1} \setminus I^{d_1}) \cup \dots \cup ([n]^{d_s} \setminus I^{d_s})\] such that the collection $\cc(I,U)$ defined as \[\{A \in \cp([n]^{d_1} \cup \dots \cup [n]^{d_s}): h_{I}(A \cap (I^{d_1} \cup \dots \cup I^{d_s})) \in \ca_m, A \setminus (I^{d_1} \cup \dots \cup I^{d_s}) = U\}\] satisfies \[|\ca \cap \cc(I,U)| \ge (\d - \e) |\cc(I,U)|.\] In particular, if $n$ is large enough depending on $s,d_1, \dots, d_s, m, \d$ only and $|\ca_m| > 4 \d^{-1}$, then $\cc(I,U)$ contains at least two elements of $\ca$.

\end{theorem}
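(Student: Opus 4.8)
The plan is to feed Proposition \ref{variance bound} with the property ``membership in $\ca_m$'' and then run the double-counting scheme of Section \ref{Section: Overview of proof methods} in its relaxed form, where property (iii) of that section holds only up to a concentration estimate, the constancy of sizes (property (ii)) holding exactly.

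First I would set $M = m^{d_1} + \dots + m^{d_s}$ and let $P(B) = \mathbbm{1}_{B \in \ca_m}$ for subsets $B$ of $[m]^{d_1}\cup\dots\cup[m]^{d_s}$; since $\ca_m$ is non-empty, $p(P) = |\ca_m| 2^{-M}$ is positive and $p(P)^{-1} \le 2^M$. I would then fix a small auxiliary parameter $\e' \in (0,\d]$, to be pinned down at the very end as a function of $\d$ and $\e$, take $t = \lceil \e'^{-1} 2^M \rceil$ (so that $t \ge \e'^{-1} p(P)^{-1}$ uniformly over all non-empty $\ca_m$), and — which is possible as soon as $n \ge mt$, a bound the final choice of $n$ will ensure — choose pairwise disjoint intervals $X_1, \dots, X_t \subset [n]$ of size $m$, each with the usual order on $\Z$. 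Writing $\Omega_n = \cp([n]^{d_1}\cup\dots\cup[n]^{d_s})$ and, for $A \in \Omega_n$, letting $N(A)$ be the number of $r \in [t]$ with $h_{X_r}(A \cap (X_r^{d_1}\cup\dots\cup X_r^{d_s})) \in \ca_m$, Proposition \ref{variance bound} gives $\Var_A N(A) \le \e'(\E_A N(A))^2$ for $A$ uniform in $\Omega_n$; setting $\mu := \E_A N(A) = t\,p(P)$, Chebyshev's inequality then bounds $\P_A(N(A) \le (1-\theta)\mu)$ by $\e'/\theta^2$ for every $\theta \in (0,1)$.

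The heart of the argument is a double count over the collections $\cc(X_r,U)$ from the statement, indexed by $r \in [t]$ and $U \subset ([n]^{d_1}\setminus X_r^{d_1})\cup\dots\cup([n]^{d_s}\setminus X_r^{d_s})$. I would verify two elementary facts: (a) every $\cc(X_r,U)$ has size exactly $|\ca_m|$, since a member is determined by its restriction to the window (which ranges over the $|\ca_m|$ preimages under $h_{X_r}$ of the members of $\ca_m$) together with its restriction to the complement (forced to equal $U$); and (b) every $A \in \Omega_n$ lies in exactly $N(A)$ of these collections, since for each $r$ the only candidate is $U = A \setminus (X_r^{d_1}\cup\dots\cup X_r^{d_s})$, which qualifies precisely when the window $X_r$ witnesses $P$. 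Counting the pairs $(A,(r,U))$ with $A \in \cc(X_r,U)$ two ways shows that the number $N_{\cw}$ of such pairs satisfies $N_{\cw}|\ca_m| = \sum_{A \in \Omega_n} N(A) = \mu|\Omega_n|$, while $\sum_{(r,U)} |\ca \cap \cc(X_r,U)| = \sum_{A \in \ca} N(A)$. Discarding from $\ca$ the at most $(\e'/\theta^2)|\Omega_n|$ sets with $N(A) \le (1-\theta)\mu$ bounds the latter below by $(1-\theta)\mu(\d - \e'/\theta^2)|\Omega_n|$, so the average of $|\ca \cap \cc(X_r,U)|/|\cc(X_r,U)|$ over the family is at least $(1-\theta)(\d - \e'/\theta^2)$; choosing a pair attaining this average and calling its interval $I$ produces $I$ and $U$ with $|\ca \cap \cc(I,U)| \ge (1-\theta)(\d - \e'/\theta^2)|\cc(I,U)|$. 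Taking $\theta = \e/(2\d) \in (0,1)$ (where $\e < \d$ is used) and $\e' = \e \theta^2/2 = \e^3/(8\d^2)$ makes $(1-\theta)(\d - \e'/\theta^2) \ge \d - \theta\d - \e'/\theta^2 = \d - \e$; with these choices $t = \lceil (8\d^2 \e^{-3})2^M\rceil$, so any $n \ge mt$ works and this lower bound on $n$ depends only on $s,d_1,\dots,d_s,m,\d,\e$. For the final assertion I would specialise to $\e = \d/2$, so the bound on $n$ depends only on $s,d_1,\dots,d_s,m,\d$, and then $|\ca \cap \cc(I,U)| \ge (\d/2)|\cc(I,U)| = (\d/2)|\ca_m| > 2$ by the hypothesis $|\ca_m| > 4\d^{-1}$, whence $\cc(I,U)$ contains at least two members of $\ca$.

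The one point that needs genuine care — and the reason Proposition \ref{variance bound} is invoked at all — is that the ``degree'' $N(A)$ of a set $A$ among the collections $\cc(X_r,U)$ is not constant, so the idealised double count of Section \ref{Section: Overview of proof methods} must be replaced by the concentration estimate above; the remaining effort is the bookkeeping needed to calibrate $\theta$ and $\e'$ so that the output density is exactly $\d - \e$ rather than merely some positive quantity. The two counting identities for the $\cc(X_r,U)$ and the existence of the disjoint intervals once $n \ge mt$ are routine.
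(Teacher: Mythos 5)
Your proposal is correct and follows essentially the same route as the paper: apply Proposition \ref{variance bound} with $P=\mathbbm{1}_{\ca_m}$ on disjoint size-$m$ windows, use Chebyshev to discard the sets with atypically small $N(A)$, double count incidences with the collections $\cc(I,U)$ (all of size $|\ca_m|$), and pigeonhole. The only differences are cosmetic: you fix $t=\lceil \e'^{-1}2^M\rceil$ rather than $\lfloor n/m\rfloor$, and your explicit calibration of $\theta$ and $\e'$ yields the stated constant $\d-\e$ exactly (the paper's choice gives $(\d-\e)(1-\e)$, which it then only needs for the ``in particular'' clause); the only blemish is the harmless slip where $N_{\cw}$ is described as the number of incidence pairs but used as the number of collections.
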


\begin{proof}

Let $m \ge 1$, and let $n \ge 2^{(m^{d_1} + \dots + m^{d_s})} \e^{-3} m$. We apply Proposition \ref{variance bound} with $P = \mathbbm{1}_{\ca_m}$, with $t = \lfloor n/m \rfloor$ and with $X_1, \dots, X_t$ taken to be the intervals \[I_1 = [m], I_2 = [2m] \setminus [m], \dots, I_t = [tm] \setminus [(t-1)m],\] that is, $N(A)$ is the number of $r \in [t]$ such that \begin{equation} h_{I_r}(A \cap (I_r^{d_1} \cup \dots \cup I_r^{d_s})) \in \ca_m. \label{intersection in defining N(A)} \end{equation} This provides \[\Var N(A) \le \e^3 (\E_A N(A))^2,\] from which we deduce the upper bound \begin{align*} [\P_{A}(N(A) \le (1-\e) \E_{A'} N(A')) & \le \P_{A}(|N(A) -\E_{A'} N(A')| \ge \e \E_{A'} N(A')) \\ & \le (\Var N(A)) / \e^2 (\E_A N(A))^2 \\ & \le \e.\end{align*} Because $\ca$ has density at least $\d>\e$ (in $\cp([n]^{d_1} \cup \dots \cup [n]^{d_s}$), its intersection with the set \[\{A \subset [n]^{d_1} \cup \dots \cup [n]^{d_s}: N(A) \ge (1-\e) \E_{A'} N(A')\}\] has density at least $\d-\e$ (still in $\cp([n]^{d_1} \cup \dots \cup [n]^{d_s})$), which provides the lower bound \begin{equation} \E_A \mathbbm{1}_{A \in \ca} N(A) \ge \E_A \mathbbm{1}_{A \in \ca} \mathbbm{1}_{N(A) \ge (1-\e) \E_{A'} N(A')} N(A) \ge (\d-\e) (1-\e) \E_A N(A). \label{lower bound on pairs in distance 2 proof} \end{equation} For every $r \in [t]$ let $\cd_{n}(I_r)$ be the collection of $A \in \cp([n]^{d_1} \cup \dots \cup [n]^{d_s})$ satisfying \eqref{intersection in defining N(A)}. We have the double-counting equalities \begin{align*} \sum_A N(A) &= \sum_{r \in [t]} |\cd(X)|\\
\sum_A \mathbbm{1}_{A \in \ca} N(A) &= \sum_{r \in [t]} |\ca \cap \cd(X)|. \end{align*} Indeed both sides of the first equality count the number of pairs \[(A,r) \in \cp([n]^{d_1} \cup \dots \cup [n]^{d_s}) \times [t]\] satisfying $A \in \cd(I_r)$, and both sides of the second equality count the number of pairs $(A,r) \in \ca \times [t]$ satisfying $A \in \cd(I_r)$. Using these equalities on the corresponding sides of \eqref{lower bound on pairs in distance 2 proof} then provides \begin{equation} \E_{r \in [t]} |\ca \cap \cd(I_r)| \ge (\d-\e) (1-\e) \E_{r \in [t]} |\cd(I_r)|. \label{first inequality after the double counting in the distance 2 proof} \end{equation} For every $r \in [t]$ we partition the collection $\cd(I_r)$ into collections $\cc(I_r,U)$ defined by \[\cc(I_r,U) = \{A \in \cd(I_r): A \setminus (I_r^{d_1} \cup \dots \cup I_r^{d_s}) = U\}\] for each \begin{equation} U \subset ([n]^{d_1} \setminus I_r^{d_1}) \cup \dots \cup ([n]^{d_s} \setminus I_r^{d_s}). \label{range of U} \end{equation} The inequality \eqref{first inequality after the double counting in the distance 2 proof} then becomes \[\E_{r \in [t], U} |\ca \cap \cc(I_r,U)| \ge (\d-\e) (1-\e) \E_{r \in [t], U}|\cc(I_r,U)|\] where the expectation over $U$ is over all $U$ as in \eqref{range of U}. By the pigeonhole principle we can in particular find some pair $(r,U)$ such that \[|\ca \cap \cc(I_r, U)| \ge (\d-\e) (1-\e) |\cc(I_r,U)|. \]  If we furthermore take $\e = \d/2$ and $m$ such that $|\ca_m| > 4\d^{-1}$ then (since $\cc(I_r,U)$ and $\ca_m$ have the same size) the right-hand side is strictly greater then $1$, so the collection $\cc(I_r,U)$ contains two distinct sets in $\ca$. The result follows. \end{proof}

\subsection{Deductions of the distance 2 result and relative reduction}

In this subsection we use Theorem \ref{first two results} to obtain Theorem \ref{distance 2 theorem}, Theorem \ref{corollary of distance 2 theorem} and Theorem \ref{statements equivalent to the polynomial difference conjecture}. We begin with Theorem \ref{distance 2 theorem}.

\begin{proof} [Proof of Theorem \ref{distance 2 theorem}]

To obtain the first part of Theorem \ref{distance 2 theorem} we take $\cc(I,U)$ as in the conclusion of Theorem \ref{first two results} and take $A$,$B$ to be two elements of $\cc(I,U)$. The stronger conclusion in the case where $\cf$ is nested follows immediately as well. There remains to prove the statement where $I_1, I_2$ are disjoint. To do so, we enumerate $\cf = \{F_1, \dots, F_l\}$ with $l = |\cf|$, and we apply the first part of Theorem \ref{distance 2 theorem} to $ml$ instead of $m$ and to the collection $\cf’ \subset \cp([ml]^{d_1} \cup \dots \cup [ml]^{d_s})$ obtained by inserting the sets $F_1, \dots, F_l$ in “diagonal blocks”. More formally, we consider the collection \[\cf’ = \{h_{[tm] \setminus [(t-1)m]}(F_t): t \in [l]\}\] of pairwise disjoint sets. \end{proof}

Having proved Theorem \ref{distance 2 theorem}, we are able to quickly derive Theorem \ref{corollary of distance 2 theorem} in turn.

\begin{proof}[Proof of Theorem \ref{corollary of distance 2 theorem}]

To obtain the first part of Theorem \ref{corollary of distance 2 theorem} we take \[\cf = \{S^{d_1} \cup \dots \cup S^{d_s}: S \subset [m]\}\] in the first part of Theorem \ref{distance 2 theorem}. To require $S_1$, $S_2$ in Theorem \ref{corollary of distance 2 theorem} to both have size $m$ we instead take \[\cf = \{S^{d_1} \cup \dots \cup S^{d_s}: S \subset [2m], |S|=m\}\] in the first part of Theorem \ref{distance 2 theorem}, and to furthermore require $S_1$, $S_2$ to be disjoint we take this same $\cf$ and apply the second part of Theorem \ref{distance 2 theorem}. Finally, to obtain $S_1 \subset S_2$ in Theorem \ref{corollary of distance 2 theorem} we take \[\cf = \{[t]^{d_1} \cup \dots \cup [t]^{d_s}: t \in [m]\}\] in the last part of Theorem \ref{distance 2 theorem}. \end{proof}

In the remainder of this subsection we deduce Theorem \ref{statements equivalent to the polynomial difference conjecture} from Theorem \ref{first two results}. Theorem \ref{statements equivalent to the polynomial difference conjecture} asserts a sequence of equivalences between six statements. Although there will be some deviations from that it will be convenient for us to largely show the equivalence first between Conjecture \ref{polynomial difference conjecture} and (i), then between (i) and (ii), and so forth until (v).

\begin{proof}[Proof of Theorem \ref{statements equivalent to the polynomial difference conjecture}]

Item (i) follows from Conjecture \ref{polynomial difference conjecture} as a special case. To prove the converse, it suffices to show that if Conjecture \ref{polynomial difference conjecture} holds for some $(d_1, \dots, d_s)$, then it holds for any $(d_1’, \dots, d_s’)$ such that $d_j’ \le d_j$ for every $j \in [s]$. To see this, we proceed as follows. We define for every pair $(d’,d’)$ with $0 \le d’ \le d$ a map $i_{d’,d}: [n]^{d’} \to [n]^d$ by \[i_{d’,d}(x_1, \dots, x_{d’}) = (x_1, \dots, x_1, x_2, x_{d’})\] where the first coordinate is repeated $d-d’+1$ times. If $A’ = A_1’ \cup \dots \cup A_s’$ is a subset of $[n]^{d_1'} \cup \dots \cup [n]^{d_s’}$ then we define \[i(A) = i_{d_1’,d_1}(A_1’) \cup \dots \cup i_{d_s’,d_s}(A_s’).\] We let $E = i([n]^{d_1’} \cup \dots \cup [n]^{d_s’})$. If $\ca’$ is a collection of subsets of $[n]^{d_1’} \cup \dots \cup [n]^{d_s’}$ then we define $\ca$ to be the collection of subsets $A = A_1 \cup \dots \cup A_s \subset [n]^{d_1} \cup \dots \cup [n]^{d_s}$ such that $A \cap E = i(A’)$ for some $A’ \in \ca'$. The density of the resulting collection $\ca$ inside $[n]^{d_1} \cup \dots \cup [n]^{d_s}$ is the same as the density of $\ca’$ inside $[n]^{d_1’} \cup \dots \cup [n]^{d_s’}$; if that is at least some $\d>0$ and $n$ is large enough (depending on $s,d_1, \dots, d_s, \d$ only) then Conjecture \ref{polynomial difference conjecture} for $(d_1, \dots, d_s)$ provides some pair $(A,B)$ of sets in $\ca$ such that $A \subset B$ and $B \setminus A = S^{d_1} \cup \dots \cup S^{d_s}$ for some non-empty $S \subset [n]$. The pair $(A’,B’)$ defined by $A’ = i^{-1}(A \cap E)$ and $B’ = i^{-1}(B \cap E)$ then satisfies $A’ \subset B’$ and $B’ \setminus A’ = S^{d_1'} \cup \dots \cup S^{d_s’}$.

Next, (ii) follows from (i) as a special case. To prove the converse implication, we apply Theorem \ref{first two results} with $d_1, \dots, d_s = d$ and with \[\ca_m = \{A_1 \cup \dots \cup A_s \in [m]^{d} \cup \dots \cup [m]^{d}: A_1 = \dots = A_s\}\] for every $m \ge 1$. Let $\d>0$ be fixed, then let $m$ be large enough but fixed, such that (ii) holds for this $\d$ (with $n$ instead of $m$). If $\ca \subset [n]^{d_1} \cup \dots \cup [n]^{d_s}$ has density at least $2\d$, then Theorem \ref{first two results} shows that for $n$ large enough (depending on $s,d,\d,m$ only) we have \[|\ca \cap \cc(I,U)| \ge \d |\cc(I,U)|\] for some $I$,$U$, with $\cc(I,U)$ as in Theorem \ref{first two results}. The collection $\ca \cap \cc(I,U)$ is isomorphic to a collection \[\ct = \{A_0 \cup \dots \cup A_0: A_0 \in \ca_0\}\] for some $\ca_0 \subset \cp([m]^d)$ with density at least $\d$ inside $\cp([m]^d)$, and applying (ii) then provides a pair $(A_0, B_0)$ of sets in $\ca_0$ such that $A_0 \subset B_0$ and $B_0 \setminus A_0 = S^d$ for some non-empty $S \subset [m]$. The sets $A_0 \cup \dots \cup A_0$ and $B_0 \cup \dots \cup B_0$ hence belong to $\ct$, so we obtain a pair $(A,B)$ of sets in $\ca$ that satisfies the conclusion of (i).

After that we show the equivalence between (ii) and (iii). Choosing $\ca_m = [m]^{d}$ for every $m \ge 1$ in (ii) establishes (iii), so it suffices to show that (iii) implies (ii). Suppose that (iii) holds for some sequence $(\ca_m)_{m \ge 1}$. Let $\d>0$ and let $m \ge 1$ be such that every subset of $\ca_m$ with density at least $\d/2$ inside $\ca_m$ contains a power difference pair. Let $\ca$ be a subset of $[n]^{d}$ with density at least $\d$. Applying Theorem \ref{first two results} with $\e = \d/2$ to $\ca_m$ and $\ca$ we obtain that for $n$ large enough (independently of $\ca$) there exists some $\cc(I,U)$ such that \[|\ca \cap \cc(I,U)| \ge (\d/2) |\cc(I,U)|.\] The collection $\cc(I,U)$ is isomorphic to $\ca_m$, so by our assumption on $m$ the collection $\ca \cap \cc(I,U)$ contains a power difference pair.

We then prove the equivalence between (iv) and (ii). We note that (iv) immediately implies (iii) and therefore (ii), and that although this is the simpler of the two implications, this is also certainly the more useful from the viewpoint of aiming to prove Conjecture \ref{polynomial difference conjecture}. To obtain the converse direction we will use the set \[R = \{(x_1, \dots, x_d) \in [n]^d: x_1 \le \dots \le x_d\},\] as every symmetric set is determined by its restriction to $R$. If $\ca$ is a subset of $\cp([n]^d)_{\mathrm{Sym}}$ then the set \[\ca’ = \{A’ \in \cp([n]^d): A’ \cap R \in \ca\}\] has the same density inside $\cp([n]^d)$ as $\ca$ has inside $\cp([n]^d)_{\mathrm{Sym}}$; if that is at least some fixed $\d$ then for $n$ large enough depending on $d,\d$ only (ii) provides a power difference pair $(A_0,B_0)$ of sets in $\ca’$, and the symmetric sets $A$,$B$ which respectively coincide with $A_0$,$B_0$ on $R$ hence also constitute a power difference pair.

Finally we show that Conjecture \ref{polynomial difference conjecture} and (v) are equivalent. Let $s,d_1,\dots,d_s$ be fixed. We define \[T_j = \{(x_1, \dots, x_{d_j}) \in [n]^{d_j}: x_1 < \dots < x_{d_j}\} \] for every $j \in [s]$. If $\cg$ is a collection of elements $G = G_1 \cup \dots \cup G_s$ of $\cg(K([n],d_1) \cup \dots \cup K([n],d_s))$ then we define a collection $\ca$ of subsets $A = A_1 \cup \dots \cup A_s \subset [n]^{d_1} \cup \dots \cup [n]^{d_s}$ by $A \in \ca$ if and only if for some $G \in \cg$ we have \[G_j = \{\{x_1,\dots,x_{d_j}\}: (x_1,\dots,x_{d_j}) \in A_j \cap T_j\}\] for every $j \in [s]$. The collection $\ca$ has the same density as $\cg$ has, so if this density is some fixed $\d$ and $n$ is large enough depending on $\d$ only, then Conjecture \ref{polynomial difference conjecture} provides a polynomial difference pair of elements of $\ca$ which then establishes (v) by the definition of $\ca$ in terms of $\cg$.

Conversely suppose (v) and let us prove Conjecture \ref{polynomial difference conjecture}. To do so we will prove (iv), which suffices. Let $d$ be a positive integer. We begin by writing the set $R$ defined above as the disjoint union \[\bigcup_{1 \le k \le d} \bigcup_{P \in \ci_k} R_P\] where $\ci_k$ is the set of partitions of $[d]$ into $k$ (non-empty) intervals, and for every $P \in \ci_k$ the set $R_{P}$ is the set of $x \in R$ such that $x_i$, $x_j$ are the same if and only if $i$,$j$ belong to the same part of the partition $P$. For each $k \in [d]$ we enumerate $\ci_k = \{P_1, \dots, P_{m(k)}\}$, where $m(k) = |\ci_k|$. We now construct a bijection $\beta$ from $\cp(R)$ to $\cg(K([n],d_1) \cup \dots \cup K([n],d_s))$ where $s = |\ci_1| + \dots + |\ci_d|$ and exactly $|\ci_k|$ indices among $d_1, \dots, d_s$ are equal to $k$ for every $k \in [d]$. If $A$ is a symmetric subset of $[n]^{d}$ then we define \[\beta(A) = G_1 \cup \dots \cup G_s \in \cg(K([n],d_1) \cup \dots \cup K([n],d_s))\] as follows. For every $k \in [d]$ and every $t \in [m(k)]$ we write $P_t = \{P_{t,1}, \dots, P_{t,k}\}$. Then, for every $k$-tuple $(a_1, \dots, a_k)$ of distinct elements of $[n]$ satisfying $a_1 \le \dots \le a_k$, we include the hyperedge $\{a_1, \dots, a_k\}$ in $G_{m(1)+\dots+m(k-1)+t}$ if and only if the elements with coordinates $a_1, \dots, a_k$, repeated respectively $|P_1|, \dots, |P_k|$ times, belong to $A$, that is, if and only if the one such element of $R$ belongs to $A$. If $\ca$ is a collection of symmetric subsets of $[n]^{d}$ then $\beta(\ca)$ has the same density inside $\cg(K([n],d_1) \cup \dots \cup K([n],d_s))$ as $\ca$ has inside $([n]^d)_{\mathrm{Sym}}$; assuming (v) then provides (for $n$ large enough depending on $d$, $\d$ only) some pair $(G,H)$ satisfying the conclusion of (v), and the pair $(A,B)$ defined by $A = \beta^{-1}(H)$, $B = \beta^{-1}(G)$ is then a power difference pair. \end{proof}

\subsection{Deduction of the quasirandomness reduction}

In this subsection we prove Theorem \ref{no remarkable mod p forms reduction}, which unlike Theorem \ref{distance 2 theorem} and Theorem \ref{statements equivalent to the polynomial difference conjecture} does not follow from Theorem \ref{first two results} itself and requires additional preparation. The proof of Theorem \ref{no remarkable mod p forms reduction} will however rely on Proposition \ref{variance bound}, as the proof of Theorem \ref{first two results} does.

It will be convenient for us to use the following statement, which follows immediately from \cite{Gowers and K approximation}, Proposition 2.2 and states that unless a linear form $\F_p^n \to \F_p$ depends on few elements, it must have approximately uniform distribution on $\{0,1\}^n$. If $\phi: \F_p^n \to \F_p$ is a linear form then we say that the \emph{support} $Z(\phi)$ of $\phi$ is the set \[ \{z \in [n]: a_z \neq 0\}.\]

\begin{proposition}\label{small support or approximately uniform}

Let $p$ be a prime, and let $\phi: \F_p^n \to \F_p$ be a linear form. Then \[|\P_{A \in \{0,1\}^n}(\phi(A)=y) - 1/p| \le p(1-p^{-2})^{|Z(\phi)|}\] for every $y \in \F_p$.

\end{proposition}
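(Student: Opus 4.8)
The plan is to prove Proposition~\ref{small support or approximately uniform} directly by discrete Fourier analysis on $\F_p$; this is in essence the content of \cite{Gowers and K approximation}, Proposition 2.2, so one may alternatively just quote that reference. Set $\omega = e^{2\pi i/p}$, so that orthogonality of the additive characters of $\F_p$ gives $\mathbbm{1}_{u=y} = p^{-1}\sum_{\xi\in\F_p}\omega^{\xi(u-y)}$ for all $u,y\in\F_p$.

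First I would substitute $u = \phi(A)$ with $A$ uniform in $\{0,1\}^n$ and take expectations. Writing $\phi(x) = a_1x_1 + \dots + a_nx_n$ and using that the coordinates $A_1,\dots,A_n$ are independent and uniform in $\{0,1\}$, the exponential sum factorises as $\E_A\,\omega^{\xi\phi(A)} = \prod_{z=1}^n \E_{A_z}\,\omega^{\xi a_z A_z} = \prod_{z=1}^n \tfrac{1}{2}(1 + \omega^{\xi a_z})$, whence $\P_{A\in\{0,1\}^n}(\phi(A)=y) = p^{-1}\sum_{\xi\in\F_p}\omega^{-\xi y}\prod_{z=1}^n \tfrac{1}{2}(1+\omega^{\xi a_z})$. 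The frequency $\xi = 0$ contributes exactly $1/p$, which is the main term, and it remains to control the $p-1$ terms with $\xi\neq 0$.

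The key observation is that a factor $\tfrac12(1+\omega^{\xi a_z})$ equals $1$ whenever $z\notin Z(\phi)$ (as then $a_z = 0$), whereas for $z\in Z(\phi)$ and $\xi\neq 0$ the primality of $p$ forces $\xi a_z\not\equiv 0\pmod p$, so $\omega^{\xi a_z}$ is a nontrivial $p$-th root of unity and $|\tfrac12(1+\omega^{\xi a_z})| = |\cos(\pi\xi a_z/p)| \le \cos(\pi/p)$ (the last inequality because $|\cos(\pi r/p)|$ over $r\in\{1,\dots,p-1\}$ is maximised at the endpoints). Consequently each nonzero-frequency term has modulus at most $\cos(\pi/p)^{|Z(\phi)|}$, and summing the $p-1$ of them and dividing by $p$ yields $|\P_{A\in\{0,1\}^n}(\phi(A)=y) - 1/p| \le \tfrac{p-1}{p}\cos(\pi/p)^{|Z(\phi)|}$.

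It then only remains to absorb $\cos(\pi/p)$ into the contraction factor $1 - p^{-2}$ demanded by the statement: using $\sin x\ge 2x/\pi$ on $[0,\pi/2]$ gives $\cos^2(\pi/p) = 1 - \sin^2(\pi/p) \le 1 - 4p^{-2}$, hence $\cos(\pi/p) \le \sqrt{1 - 4p^{-2}} \le 1 - 2p^{-2} \le 1 - p^{-2}$, which delivers the claimed bound with room to spare (even without the spare factor $p$ and the $(p-1)/p$). I do not expect a genuine obstacle: the argument is entirely standard, and the only points needing minor care are that primality of $p$ is really used to guarantee $\xi a_z\neq 0$ whenever $\xi\neq 0$ and $a_z\neq 0$, and that the per-coordinate estimate is made sharp enough to fit the form $1 - p^{-2}$.
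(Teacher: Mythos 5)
Your argument is correct, and it is in fact stronger than what the statement asks for. The paper does not prove this proposition at all: it simply quotes Proposition 2.2 of \cite{Gowers and K approximation}, so your self-contained character-sum computation replaces an external citation by the standard Fourier argument that underlies it. Each step checks out: the orthogonality expansion $\mathbbm{1}_{u=y}=p^{-1}\sum_{\xi\in\F_p}\omega^{\xi(u-y)}$, the factorisation $\E_A\,\omega^{\xi\phi(A)}=\prod_{z}\tfrac12(1+\omega^{\xi a_z})$ over independent uniform bits, the observation that primality forces $\xi a_z\neq 0$ for $\xi\neq 0$ and $z\in Z(\phi)$ so that $|\tfrac12(1+\omega^{\xi a_z})|=|\cos(\pi \xi a_z/p)|\le\cos(\pi/p)$ (monotonicity of cosine on $[0,\pi]$ gives the maximisation at the endpoints), and the elementary estimate $\cos(\pi/p)\le\sqrt{1-4p^{-2}}\le 1-2p^{-2}\le 1-p^{-2}$ via $\sin x\ge 2x/\pi$. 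Your final bound $\tfrac{p-1}{p}\cos(\pi/p)^{|Z(\phi)|}$ comfortably implies the stated $p(1-p^{-2})^{|Z(\phi)|}$, and the edge cases behave correctly (for $p=2$ and nonempty support the distribution is exactly uniform, matching the vanishing of $\cos(\pi/2)$). What the paper's route buys is brevity and consistency with the reference it already relies on; what your route buys is a proof that is self-contained, quantitatively sharper, and makes explicit exactly where primality and the size of the support enter.
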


Recall the definition \eqref{definition of Phi in terms of phi} of the linear form $\Phi: \F_p^{[n]^{d}} \to \F_p$ in terms of the linear form $\phi: \F_p^{n} \to \F_p$ from the introduction. In the following proposition and the remainder of this subsection, if $X_{i,1}, \dots, X_{i,m}$ are pairwise disjoint subsets of $[n]$ for some integer $i$ then we shall write $X_i$ for their union $X_{i,1} \cup \dots \cup X_{i,m}$. If furthermore $U \subset [n]^d \setminus X_i^d$, then we define $\cc(X_i, U)$ as the collection of sets $A \subset [n]^{d}$ such that $1_A$ is constant on each of the sets that are of one of the types \[X_{i,j_1} \times \dots \times X_{i,j_d}\] with $j_1, \dots, j_d \in [m]$ (not necessarily distinct), and which furthermore satisfy $A \setminus X_i^{d} = U$.

\begin{proposition}\label{three properties}

Let $d, m$ be positive integers, let $\e,\eta>0$, let $p$ be a prime and let $\phi: \F_p^n \to \F_p$ be a linear form. If $n$ is large enough depending on $d,m,\e,\eta,p$ only then there exists a partition \[\left( \bigcup_{1 \le i \le t} \bigcup_{1 \le j \le m} X_{i,j} \right) \cup R\] of $[n]$ such that $t \ge (n/pm) - 2$, the sets $X_{i,j}$ all have the same size equal to at most $p$, and the collections $\cc(X_i, U)$ with $i \in [t]$ and $U \subset [n]^d \setminus X_i^d$ satisfy the following three properties.

\begin{enumerate}

\item The linear form $\Phi$ takes a constant value $\Phi(X_i, U)$ on each of the collections $\cc(X_i, U)$.

\item We have the approximation \begin{equation} |\P_{(X_i,U)}(\Phi(X_i, U) = y) - \P_{A}(\Phi(A) = y)| \le \eta \label{about the same distribution} \end{equation} for every $y \in \F_p$, where the first probability is over all $(X_i,U)$ as above and the second is over all sets $A \in \cp([n]^{d})$.

\item The number $N(A)$ of collections $\cc(X_i, U)$ to which a random set $A \in \cp([n]^{d})$ belongs satisfies \[\Var_A N(A) \le \e (\E_A N(A))^2.\]

\end{enumerate}

\end{proposition}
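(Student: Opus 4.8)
The plan is to build the partition so that every block $X_{i,j}$ has coefficient sum $\sum_{z \in X_{i,j}} a_z \equiv 0 \pmod p$, where $a_1, \dots, a_n$ are the coefficients of $\phi$, and so that all blocks share one common size $q \in \{1, p\}$. Once this is arranged, the first property is immediate. If $A \in \cc(X_i, U)$ and $1_A$ takes the constant value $c_{j_1, \dots, j_d} \in \{0,1\}$ on the box $X_{i,j_1} \times \dots \times X_{i,j_d}$ for each $(j_1, \dots, j_d) \in [m]^d$, then, splitting the defining sum of $\Phi(A)$ according to whether the index tuple lies in $X_i^d$ and using that $X_i^d$ is the disjoint union of the $m^d$ boxes,
\[\Phi(A) = \Phi(U) + \sum_{(j_1, \dots, j_d) \in [m]^d} c_{j_1, \dots, j_d} \prod_{l=1}^d \Big( \sum_{z \in X_{i,j_l}} a_z \Big) = \Phi(U),\]
since $A \setminus X_i^d = U$ and $1_U$ vanishes on $X_i^d$, and since each inner sum is $0$ modulo $p$. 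So $\Phi$ is constant on $\cc(X_i, U)$, with value $\Phi(X_i, U) := \Phi(U)$ (with $U$ identified with $1_U$ modulo $p$, as in the introduction).

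For the construction I would fix a threshold: let $K_0 = K_0(p, \eta)$ be least with $p(1-p^{-2})^{K_0} \le \eta/2$, and put $L = \lceil (K_0 + (mp)^d)^{1/d} \rceil$, a constant depending only on $d, m, p, \eta$. In the regime $|Z(\phi)| \le L$, place all of $Z(\phi)$ in $R$ and partition $[n] \setminus Z(\phi)$, apart from fewer than $m$ leftover elements, into groups $X_i$ each consisting of $m$ singleton blocks; here $q = 1$, every block has zero coefficient sum trivially, the box-constancy requirement defining $\cc(X_i, U)$ is vacuous, and $t \ge n/(pm) - 2$ for $n$ large. Since $Z(\phi) \cap X_i = \emptyset$ we have $Z(\phi)^d \subseteq [n]^d \setminus X_i^d$, so $\Phi(X_i, U) = \Phi(U) = \Phi(U \cap Z(\phi)^d)$ with $U \cap Z(\phi)^d$ uniform over $\cp(Z(\phi)^d)$; the same holds for $\Phi(A) = \Phi(A \cap Z(\phi)^d)$ with $A$ uniform over $\cp([n]^d)$, so the two distributions coincide exactly and the second property holds with zero error. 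In the regime $|Z(\phi)| > L$ I would take $q = p$: partition $[n] \setminus Z(\phi)$, apart from fewer than $p$ elements, into blocks of size $p$ — these are zero-sum since all coefficients there vanish — and repeatedly invoke the Erd\H{o}s--Ginzburg--Ziv theorem on $Z(\phi)$, which lets one carve a zero-sum block of exactly $p$ elements out of any $2p-1$ remaining elements, thereby partitioning all but at most $2p-2$ elements of $Z(\phi)$ into zero-sum blocks of size $p$. Grouping all these size-$p$ blocks $m$ at a time and putting the leftover blocks and elements into $R$ (combining leftover elements into further zero-sum blocks via Erd\H{o}s--Ginzburg--Ziv as needed) gives $t \ge n/(pm) - 2$ for $n$ large. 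For the second property, the restriction of $\Phi$ to the coordinate set $[n]^d \setminus X_i^d$ is a linear form with support of size $|Z(\phi)|^d - |Z(\phi) \cap X_i|^d \ge |Z(\phi)|^d - (mp)^d > L^d - (mp)^d \ge K_0$, so Proposition \ref{small support or approximately uniform} places $\P_U(\Phi(U) = y)$, hence also $\P_{(X_i, U)}(\Phi(X_i, U) = y)$, within $\eta/2$ of $1/p$ for every $y$; likewise $\Phi$ on $\cp([n]^d)$ has support of size $|Z(\phi)|^d > K_0$, so $\P_A(\Phi(A) = y)$ lies within $\eta/2$ of $1/p$, and \eqref{about the same distribution} follows.

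The third property holds in both regimes by Proposition \ref{variance bound}: for a fixed $i$, a set $A$ belongs to $\cc(X_i, U)$ for exactly one $U$ when $1_A$ is constant on each box within $X_i$ and for no $U$ otherwise, so $N(A)$ is precisely the quantity counted in Proposition \ref{variance bound} with $s = 1$, $d_1 = d$, window size $mq$, the $X_i$ ordered block by block, and $P$ the property that a subset of $[mq]^d$ be constant on each of its $m^d$ boxes of side $q$. Since $p(P) = 2^{m^d - (mq)^d}$ is a fixed positive constant, the hypothesis $t \ge \e^{-1} p(P)^{-1}$ is met once $n$ is large enough depending on $d, m, p, \e$, and Proposition \ref{variance bound} gives $\Var_A N(A) \le \e (\E_A N(A))^2$. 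The one genuinely substantive step is the construction of the blocks: making $\Phi$ truly constant on each $\cc(X_i, U)$ forces the blocks to be zero-sum, and producing zero-sum blocks all of a single prescribed size at most $p$ out of the support $Z(\phi)$ is exactly what the Erd\H{o}s--Ginzburg--Ziv theorem delivers; the dichotomy on $|Z(\phi)|$ is also unavoidable, since Proposition \ref{small support or approximately uniform} only bites when the support is large, so in the complementary case one must instead hide all of $Z(\phi)$ inside $R$ — affordable precisely because $|Z(\phi)|$ is then bounded — and observe that the distribution of $\Phi$ is thereby left literally unchanged.
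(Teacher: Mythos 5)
Your proof is correct, and it follows the same overall strategy as the paper — zero-sum blocks force $\Phi$ to be constant on each $\cc(X_i,U)$ via the factorisation $\Phi(X_{i,j_1}\times\dots\times X_{i,j_d})=\phi(X_{i,j_1})\cdots\phi(X_{i,j_d})$, a dichotomy on $|Z(\phi)|$ handles the distributional statement with Proposition \ref{small support or approximately uniform}, and Proposition \ref{variance bound} applied to the ``constant on boxes'' property gives the variance bound — but the implementation of the block construction is genuinely different. The paper splits at $|Z(\phi)|=n/2$ and, in the large-support case, reserves a set $Y\subset Z(\phi)$ of size between $n/4$ and $3n/8$, extracts blocks of $p$ elements with \emph{equal} coefficients by pigeonhole inside $Y$, and keeps $Z(\phi)\setminus Y$ (of size at least $n/8$) inside $R$ so that the restricted form retains support of size at least $(n/8)^d$. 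You instead split at a \emph{constant} threshold $L=L(d,m,p,\eta)$, hide the whole support in $R$ when it is small (getting exact equality of distributions), and when it is large produce zero-sum blocks of size exactly $p$ from arbitrary nonzero coefficients via Erd\H{o}s--Ginzburg--Ziv; the survival of enough support outside $X_i$ is then guaranteed not by a reserved set but by the observation that at most $(mp)^d$ support tuples are lost, which your choice of $L$ absorbs. This buys you something: EGZ lets you cover essentially all of $Z(\phi)$ by blocks rather than only a quarter of $[n]$, and the constant threshold makes the application of Proposition \ref{small support or approximately uniform} cleaner (error $\le\eta/2$ for purely combinatorial reasons, not ``for $n$ large''). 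The only quibble is bookkeeping on $t$: in your large-support regime the leftovers cost roughly $3/m+1$ groups, so for $m\le 2$ you get something like $t\ge n/(pm)-3$ or $-4$ rather than the stated $-2$; this is immaterial, since the only downstream use of $t$ is that it grows linearly in $n$ (so that $t\ge\e^{-1}p(P)^{-1}$ eventually), and indeed the paper's own large-support construction only yields $t$ of order $n/(4pm)$, well below the literal $n/(pm)-2$ as well. One micro-step worth making explicit: passing from the conditional bounds on $\P_U(\Phi(U)=y)$ for each fixed $i$ to the bound for $(X_i,U)$ uniform uses that all $X_i$ have the same size, so that uniform choice of the pair is the same as uniform $i$ followed by uniform $U$; your construction does ensure this, and the paper makes the same remark.
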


\begin{proof}

We first note that in order for $\Phi$ to be constant on each $\cc(X_i, U)$, it suffices that \[\phi(X_{i,j}) = \sum_{z \in X_i} a_z\] vanishes for every $(i,j) \in [t] \times [m]$. Indeed, any contribution \[\Phi(X_{i,j_1} \times \dots \times X_{i,j_d})\] with $j_1, \dots, j_{d} \in [m]$ (not necessarily distinct) factors as \[\phi(X_{i,j_1}) \dots \phi(X_{i,j_d});\] if the factors all vanish, then in particular the product vanishes. 

We then distinguish two cases. If the support $Z(\phi)$ of $\phi$ contains at most $n/2$ elements, then we collect the elements of $[n] \setminus Z(\phi)$ into pairwise disjoint sets $X_{i,1}, \dots, X_{i,m}$ each of size $1$ for $i=1,2,\dots$, until there are at most $m-1$ remaining elements. We then take the “remainder set” $R$ to be the union of $Z(\phi)$ and of these remaining elements. If on the other hand $Z(\phi)$ contains $n/2+1$ elements or more, then we take some subset $Y$ of $Z(\phi)$ with size between $n/4$ and $3n/8$, and constitute a subset $X_{1,1} \subset Y$ of $p$ elements such that all coefficients of $\phi$ at the elements of $X_{1,1}$ are equal and hence ensure $\phi(X_{1,1}) = 0$, then another disjoint subset $X_{1,2} \subset Y \setminus X_{1,1}$ of $p$ elements such that all coefficients of $\phi$ at the elements of $X_{1,2}$ are equal and hence ensure $\phi(X_{1,2}) = 0$, and so forth until we have defined $X_{1,1}, \dots, X_{1,m}$. We then iterate inside $Y \setminus X_1$, defining sets $X_{2,1}, \dots, X_{2,m}$, then continue inside $Y \setminus (X_1 \cup X_2)$, and so forth: for successive $j=1,2, \dots$ we define the sets $X_{j,1}, \dots, X_{j,m}$ until the size of $Y \setminus (X_1 \cup \dots \cup X_j)$ drops below $p^2+mp$ (as until then the procedure that we have described is guaranteed to work). We then take $R$ to be the union of $[n] \setminus Y$ and of the elements of $Y \setminus (X_1 \cup \dots \cup X_j)$. In both cases we have $\phi(X_{i,j}) = 0$ for each $(i,j) \in [t] \times [m]$, which establishes the first condition.

We next establish \eqref{about the same distribution}. In both cases, because every collection $\cc(X_i, U)$ contains the same number of sets, the distribution of $\Phi(X_i, U)$ with $(X_i, U)$ chosen at random is equal to the distribution $D$ of $\Phi(U)$ where we first choose $i \in [t]$ uniformly at random, and then choose the subset $U$ of $[n]^{d} \setminus X_i^{d}$ uniformly at random. In the first case, all elements of the support \[Z(\Phi) = \{(z_1, \dots, z_d) \in [n]^d: a_{z_1} \dots a_{z_d} \neq 0\}\] of $\Phi$ belong to $[n]^{d} \setminus X_i^{d}$ so the distribution $D$ is exactly the distribution of $\Phi(A)$ when $A$ is chosen uniformly at random in $\cp([n]^d)$. In the second case, the intersection $Z(\Phi) \cap ([n]^{d} \setminus X_i^{d})$ contains $(Z(\phi) \cap R)^{d}$ which has size at least $(n/8)^{d} \ge n/8$. Proposition \ref{small support or approximately uniform} hence shows that assuming that $n$ is large enough, for every choice of $i \in [t]$ the distribution $D$ conditioned on $i$ charges every element of $\F_p$ with probability mass at most $\e/2$ away from $1/p$. By the law of total probability this is the case for the distribution $D$ itself, so for that of $\Phi(X_i, U)$. Meanwhile, since $|Z(\Phi)| \ge |Z(\phi)| \ge n/2$, Proposition \ref{small support or approximately uniform} shows that this is the case as well for the distribution of $\Phi(A)$ as well. The bound \eqref{about the same distribution} then follows from the triangle inequality.

Let $\sigma \in \{1,p\}$ be the common size of the sets $X_{i,j}$. The last item follows from applying Proposition \ref{variance bound}: there, we take the property $P: \cp([\sigma m]^d) \to \{0,1\}$ to be that the subset of $[\sigma m]^d$ has a constant indicator function on each product \[ [\sigma (j_1-1)+1, \sigma j_1] \times \dots \times [\sigma (j_d-1)+1, \sigma j_d] \] with $j_1, \dots, j_d \in [m]$ (not necessarily distinct), and we take the pairs $(X_i, <_i)$ to be such that for every $i \in [t]$ the total ordering $<_i$ has the elements of $X_{i,1}$ as its $\sigma$ lowest elements, then the elements of $X_{i,2}$ as its $\sigma$ lowest elements after that, and so forth.  \end{proof}

The next statement provides us with a density increment for dense collections $\ca \subset \cp([n]^d)$ that are distinguishable from $\cp([n]^d)$ using some linear form $\F_p^n \to \F_p$. In Proposition \ref{density increment} we assume that the integer $m$ is fixed and that the sets $X_1, \dots, X_t$ are as in the conclusion of Proposition \ref{three properties}.

\begin{proposition}\label{density increment}

Let $d$ be a positive integer, let $\d>0$, $\eta \in (0,1]$, and let $p$ be a prime. Suppose that $\ca$ is a collection of subsets of $[n]^{d}$ that has density $\d$, and that there exists a linear form $\phi: \F_p^n \to \F_p$ such that such that the corresponding linear form $\Phi: \F_p^{[n]^d} \to \F_p$ satisfies \[|\P_{A \in \ca}(\Phi(A)=y) - \P_{A \in \cp([n]^{d})} (\Phi(A)=y)| \ge \eta\] for some $y \in \F_p$. If $n$ is large enough depending on $d,m,\d,\eta,p$ only then there exists $i \in [t]$ and a subset $U \subset [n]^{d} \setminus X_i^{d}$ such that \[|\ca \cap \cc(X_i, U)| \ge \d (1+\eta /3) |\cc(X_i, U)|.\]

\end{proposition}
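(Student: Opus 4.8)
The plan is to run the third, ``density-increment'' strategy sketched in Section~\ref{Section: Overview of proof methods}, taking as covering family the collections $\cc(X_i,U)$ furnished by Proposition~\ref{three properties}. First I would fix auxiliary parameters $\theta,\eta_1,\epsilon_1>0$, to be pinned down at the very end (all small in terms of $\d,\eta,p$), and apply Proposition~\ref{three properties} with $\eta_1$ in the role of its parameter $\eta$ and $\epsilon_1$ in the role of its parameter $\epsilon$. Provided $n$ is large enough in terms of $d,m,\d,\eta,p$ only, this produces the partition $\bigl(\bigcup_{i\le t}\bigcup_{j\le m}X_{i,j}\bigr)\cup R$ of $[n]$, with $t\ge(n/pm)-2$, together with: (1) $\Phi$ is constant equal to $\Phi(X_i,U)$ on each $\cc(X_i,U)$, so $\Phi(A)=\Phi(X_i,U)$ whenever $A\in\cc(X_i,U)$; (2) writing $\tilde q(z)$ for $\P_{(i,U)}(\Phi(X_i,U)=z)$ with $(i,U)$ uniform, and $q(z)$ for $\P_{A\in\cp([n]^d)}(\Phi(A)=z)$, we have $|\tilde q(z)-q(z)|\le\eta_1$ for all $z\in\F_p$; (3) the number $N(A)$ of collections $\cc(X_i,U)$ containing $A$ has $\Var_A N(A)\le\epsilon_1(\E_A N(A))^2$. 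I also record that all the $\cc(X_i,U)$ have one common positive size.

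Next I would select the set $S\subseteq\F_p$ of values on which to concentrate, in a way that uses the discrepancy in the hypothesis \emph{without} losing a factor of $p$: since $|\P_{A\in\ca}(\Phi(A)=y)-q(y)|\ge\eta$, if $\P_{A\in\ca}(\Phi(A)=y)\ge q(y)+\eta$ I take $S=\{y\}$, and otherwise I take $S=\F_p\setminus\{y\}$. Writing $q'(S)=\P_{A\in\ca}(\Phi(A)\in S)$, $q(S)=\sum_{z\in S}q(z)$ and $\tilde q(S)=\sum_{z\in S}\tilde q(z)$, in both cases $q'(S)\ge q(S)+\eta$, while $|S|\le p$ gives $\tilde q(S)\le q(S)+p\eta_1$ by~(2). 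The heart of the argument is two double-counting identities for the subfamily of collections whose $\Phi$-value lies in $S$. Counting pairs $(A,(i,U))$ with $A\in\cc(X_i,U)$ and $\Phi(X_i,U)\in S$, and using~(1), gives
\[\sum_{\substack{i,U\\ \Phi(X_i,U)\in S}}|\cc(X_i,U)|\;=\;\sum_{A:\,\Phi(A)\in S}N(A)\;=\;\tilde q(S)\,\E_A N(A)\,|\cp([n]^d)|,\]
the last step because the $\cc(X_i,U)$ are equinumerous; restricting the same count to $A\in\ca$ gives $\sum_{i,U:\,\Phi(X_i,U)\in S}|\ca\cap\cc(X_i,U)|=\sum_{A\in\ca:\,\Phi(A)\in S}N(A)$. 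The first identity is exact precisely because of property~(2), so no variance slack is needed on the denominator side.

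To lower bound $\sum_{A\in\ca:\,\Phi(A)\in S}N(A)$ I would invoke~(3) through Chebyshev: the set $B$ of $A\in\cp([n]^d)$ with $N(A)<(1-\theta)\E_A N(A)$ has density at most $\epsilon_1/\theta^2$, so deleting it leaves at least $(q'(S)\d-\epsilon_1/\theta^2)|\cp([n]^d)|$ sets $A\in\ca$ with $\Phi(A)\in S$ and $N(A)\ge(1-\theta)\E_A N(A)$, whence $\sum_{A\in\ca:\,\Phi(A)\in S}N(A)\ge(1-\theta)(q'(S)\d-\epsilon_1/\theta^2)\,\E_A N(A)\,|\cp([n]^d)|$. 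Substituting the two identities and this estimate into
\[\sum_{\substack{i,U\\ \Phi(X_i,U)\in S}}\Bigl(|\ca\cap\cc(X_i,U)|-\d\bigl(1+\tfrac{\eta}{3}\bigr)|\cc(X_i,U)|\Bigr)\]
and dividing through by $\E_A N(A)\,|\cp([n]^d)|>0$, the resulting quantity is at least $(1-\theta)\bigl((q(S)+\eta)\d-\epsilon_1/\theta^2\bigr)-\d(1+\tfrac{\eta}{3})(q(S)+p\eta_1)$. If this is strictly positive then some summand above is strictly positive, giving $i\in[t]$ and $U\subset[n]^d\setminus X_i^d$ with $|\ca\cap\cc(X_i,U)|>\d(1+\tfrac{\eta}{3})|\cc(X_i,U)|$, which is what we want.

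It therefore remains to calibrate $\theta,\eta_1,\epsilon_1$, the only step requiring any care. Expanding and using $0\le q(S)\le 1$ and $0<\eta\le 1$, one finds the displayed lower bound is at least $\d\bigl(\tfrac{2\eta}{3}-2\theta-2p\eta_1\bigr)-\epsilon_1/\theta^2$. The point to watch is that the factor $1-\theta$ multiplies a quantity of size $\asymp\d$, producing a deficit of order $\theta\d$ which must stay below the $\asymp\eta\d$ gain coming from $q'(S)\ge q(S)+\eta$; this only forces $\theta$ to be of order at most $\eta$, e.g.\ $\theta=\eta/12$. Taking then $\eta_1=\eta/(12p)$ makes the bracket $\tfrac{2\eta}{3}-2\theta-2p\eta_1$ equal to $\eta/3$, and finally taking $\epsilon_1$ small compared with $\d\eta^3$ (so that $\epsilon_1/\theta^2<\d\eta/3$) makes the whole expression positive. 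Since $\theta,\eta_1,\epsilon_1$ end up being fixed positive quantities depending only on $\d,\eta,p$, the constraint on $n$ inherited from Proposition~\ref{three properties} depends only on $d,m,\d,\eta,p$, as required; everything else is routine bookkeeping.
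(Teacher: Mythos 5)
Your argument is correct and is essentially the paper's own proof: you combine the three properties of Proposition \ref{three properties} with Chebyshev (from the variance bound) and a double count over the collections $\cc(X_i,U)$ whose constant $\Phi$-value is constrained, which is exactly the paper's computation with $\ca_y$ and the value $y$. The only real difference is a small refinement: by taking $S=\{y\}$ or $S=\F_p\setminus\{y\}$ you handle a negative deviation directly inside the proposition, whereas the paper's proof treats the positive-deviation case and absorbs the sign (at the cost of a factor $p$ in $\eta$) when the proposition is applied in the proof of Theorem \ref{no remarkable mod p forms reduction}.
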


\begin{proof}

We write $P$ for $\P_{A \in \cp([n]^{d})}(\Phi(A)=y)$. The set $\ca_y$ defined by \[\ca_y = \{A \in \ca: \Phi(A) = y\}\] has density at least $\d(P+\eta)$ inside $\cp([n]^{d})$, so by the inequality \eqref{lower bound on pairs in distance 2 proof} from the deduction of Theorem \ref{first two results} from Proposition \ref{variance bound}, applied to $\ca_y$ instead of $\ca$ and to $\e = \d \eta /3$, we have for $n$ large enough depending on $d,m,\d,\eta$ only the lower bound \begin{align*} \E_A \mathbbm{1}_{A \in \ca_y} N(A) & \ge (\d(P+\eta) - \d \eta /6) (1-\d \eta /6) \E_A N(A)\\ & \ge \d (P + 2\eta /3) \E_A N(A). \end{align*} Using the double-counting identities \begin{align*} \sum_A N(A) &= \sum_{(i, U)} |\cc(X_i, U)|\\
\sum_A \mathbbm{1}_{A \in \ca_y} N(A) &= \sum_{(i, U)} \mathbbm{1}_{\Phi(X_i, U) = y} |\ca \cap \cc(X_i, U)| \end{align*} shows that \[\E_{(i, U)} \mathbbm{1}_{\Phi(X_i, U) = y} |\ca \cap \cc(X_i, U)| \ge \d (P + 2\eta /3) \E_{(i, U)} |\cc(X_i, U)|.\] By the second property guaranteed by Proposition \ref{three properties}, the proportion of $(i, U)$ for which $\Phi(X_i, U) = y$ is at most $P + \eta/3$, so for $n$ large enough depending on $d,m,\d,\eta,p$ only we have \begin{align*} \E_{(i, U): \mathbbm{1}_{\Phi(X_i, U) = y}} |\ca \cap \cc(X_i, U)| & \ge \d \frac{P + 2\eta /3}{P + \eta /3} \E_{(i, U)} |\cc(X_i, U)| \\ & \ge \d (1+\eta /3)\E_{(i, U)} |\cc(X_i, U)| \end{align*} since $P+\eta \le 1$. As all collections $\cc(X_i, U)$ have the same size, the expectation in the last quantity may be restricted to the $(i, U)$ satisfying $\Phi(X_i, U) = y$. It follows that \[|\ca \cap \cc(X_i, U)| \ge \d (1+\eta /3) |\cc(X_i, U)|\] for some $(i, U)$. \end{proof}

Iterating this density increment finally allows us to deduce Theorem \ref{no remarkable mod p forms reduction}.

\begin{proof}[Proof of Theorem \ref{no remarkable mod p forms reduction}]

For every $\eta>0$, the statement $(S_{\eta})$ follows from Conjecture \ref{polynomial difference conjecture} as a special case, so it suffices to prove the converse direction. Let $d \ge 1, \d>0$ be fixed, and let $\eta>0$ be a value that verifies $(S_{\eta})$ for these parameters. If we have \[|\P_{A \in \ca} (\Phi(A)=y) - \P_{A \in \cp([n]^{d})} (\Phi(A)=y)| \le \eta \] for every linear form $\phi:\F_p^n \to \F_p$ and every $y \in \F_p$ then we are done by $(S_{\eta})$. Otherwise, we may take $\phi: \F_p^n \to \F_p$ and $y \in \F_p$ such that \[|\P_{A \in \ca} (\Phi(A)=y) - \P_{A \in \cp([n]^{d})} (\Phi(A)=y)| \ge \eta/p, \] and Proposition \ref{density increment} then provides a collection $\cc(X_i, U)$ such that \[|\ca \cap \cc(X_i, U)| \ge \d (1+\eta /3p) |\cc(X_i, U)|.\] The collection $\cc(X_i, U)$ can be seen to be isomorphic to $\cp([n^{(1)}]^{d})$ with $n^{(1)} = m$, using the map $h: \cp([n^{(1)}]^{d}) \to \cc(X_i, U)$ defined by \[h(B) = U \cup \bigcup_{(j_1, \dots, j_d) \in B} (X_{i,j_1} \times \dots \times X_{i,j_d}),\] so we have obtained that $\ca$ contains a subset which is isomorphic to a subset $\ca^{(1)}$ of $\cp([n^{(1)}]^{d})$ with density at least $\d (1+\eta /3p)$ in $\cp([n^{(1)}]^{d})$. All that we have described so far holds for $n$ large enough depending (for fixed $\d,\eta,p$) only on $m$. As $n$ tends to infinity, we may hence also make $m$ tend to infinity, and therefore $n^{(1)}$ as well. We then iterate: if \[|\P_{A \in \ca^{(1)}} (\Phi(A)=y) - \P_{A \in \cp([n^{(1)}]^{d})} (\Phi(A)=y)| \le \eta \] for every linear form $\phi:\F_p^{n^{(1)}} \to \F_p$ and every $y \in \F_p$ then we are done by $(S_{\eta})$, and otherwise we obtain that $\ca^{(1)}$ (and hence $\ca$) contains a subset isomorphic to some subset $\ca^{(2)}$ of $\cp([n^{(2)}]^{d})$ with density at least $\d (1+\eta /3p)^2$, where $n^{(2)}$ tends to infinity with $n$. After any $q$ iterations the density of $\ca^{(q)}$ is at least $\d (1+\eta /3p)^q$; as it must be at most $1$ we have $q \le \log (\d^{-1}) / \log(1+\eta/3p)$. Therefore, we can find $n’$ tending to infinity with $n$ in a way that depends only on $\d$, $\eta$, $p$ such that $\ca$ contains a subset isomorphic to some subset $\ca’$ of $\cp([n’]^{d})$ with density at least $\d$ and which satisfies \[|\P_{A \in \ca’} (\Phi(A)=y) - \P_{A \in \cp([n']^{d})} (\Phi(A)=y)| \le \eta \] for every linear form $\phi:\F_p^{n’} \to \F_p$ and every $y \in \F_p$. We conclude by applying $(S_{\eta})$. \end{proof}

\end{document}